\documentclass[12pt]{amsart}

\usepackage{amssymb}

\usepackage{enumitem}

\usepackage{graphicx}

\makeatletter
\@namedef{subjclassname@2020}{%
  \textup{2020} Mathematics Subject Classification}
\makeatother

\usepackage[T1]{fontenc}

\newtheorem{theorem}{Theorem}[section]

\newtheorem{conjecture}[theorem]{Conjecture}
\newtheorem{lemma}[theorem]{Lemma}

\theoremstyle{definition}

\newtheorem{remark}[theorem]{Remark}

\numberwithin{equation}{section}

\allowdisplaybreaks

\begin{document}

\title[Arithmetic properties of DSOME function]{Arithmetic properties of DSOME function}


\author[N. D. Baruah]{Nayandeep Deka Baruah}
\address{Department of Mathematical Sciences, Tezpur University,\\  Assam 784028, India}
\email{nayan@tezu.ernet.in; nayandeeptezu@gmail.com}

\author[P. Gogoi]{Pankaj Gogoi}
\address{Department of Mathematical Sciences, Tezpur University,\\  Assam 784028, India}
\email{msp23110@tezu.ac.in; gopankajgo07@gmail.com}

\date{}

\begin{abstract}
Recently, Andrews and Ghosh Dastidar (Ramanujan J. \textbf{69}, Art. No. 26, 2026) studied two interesting functions $SOME(n)$ and $DSOME(n)$, where $SOME(n)$ is the sum of all the odd parts in the partitions of $n$ minus the sum of all the even parts and $DSOME(n)$ is the sum of all the odd parts in the partitions of $n$ into distinct parts minus the sum of all the even parts. They expressed the generating functions of $SOME(n)$ and $DSOME(n)$ in terms of $q$-series and found several interesting congruences modulo 4 and 5. In this paper, we express the generating function of $DSOME(n)$ in a closed form, which  allows us to find some new congruences and internal congruences modulo 4 and 8 for $DSOME(n)$. 
\end{abstract}


\subjclass[2020]{Primary 11P83; Secondary 05A17}

\keywords{Integer partition, Congruence, DSOME function.}

\maketitle

\section{Introduction}
For complex numbers $a$ and $q$ with $|q|<1,$ define the standard $q$-\emph{products}
$$(a;q)_{0}:=1,\, (a;q)_{n}:=\displaystyle\prod_{k=0}^{n-1}(1-aq^k),\, \text{and}\, (a;q)_{\infty}:=\lim_{n\to\infty}(a;q)_{n}.$$
\par
In the sequel, we set $f_n:=(q^n;q^n)_{\infty}$ for integers $n\geq1.$\\
A partition $\lambda:=\left(\lambda_1,\lambda_2,\dotsc,\lambda_k\right)$ of a positive integer $n$ is a finite non-increasing sequence of positive integers $\lambda_1,\lambda_2,\dotsc,\lambda_k $
such that $\lambda_1+\lambda_2+\dotsc+\lambda_k=n. $ The partition function $p(n)$ is defined as the number of partitions of $n.$ It is well known that the generating function of $p(n)$ is given by
$$\sum_{n=0}^{\infty}p(n)q^n=\frac{1}{f_1},$$
where by convention, we set $p(0)=1.$ The arithmetic properties of the partition function have been studied extensively after Ramanujan \cite{R5n} found his famous congruences modulo 5, 7 and 11, namely,
\begin{align*}
    p(5n+4)&\equiv 0\pmod{5},\\
     p(7n+5)&\equiv 0\pmod{7},\\
      p(11n+6)&\equiv 0\pmod{11}.
\end{align*}
\par
 Recently, Andrews and Ghosh Dastidar \cite{SOME} studied two interesting functions related to partitions, namely, $SOME(n)$ and $DSOME(n)$, where $SOME(n)$ is the sum of all the odd parts in the partitions of $n$  minus the sum of all the even parts and $DSOME(n)$ is the sum of all the odd parts in the partitions of $n$ into distinct parts minus the sum of all the even parts. They found that the generating functions of $SOME(n)$ and $DSOME(n)$ can be expressed as
 \begin{align} \sum_{n=0}^{\infty}SOME(n)q^n&=\frac{1}{(q;q)_{\infty}}\sum_{m\geq1}\frac{q^m}{\left(1+q^m\right)^2}\notag\\\intertext{and}
\label{DSOMEa}\sum_{n=0}^{\infty}DSOME(n)q^n&=(-q;q)_{\infty}\sum_{m\geq1}\frac{(-1)^{m-1}q^m}{\left(1+q^m\right)^2}\\
&=(-q;q)_{\infty}\sum_{n=1}^{\infty}\left(\frac{(2n-1)q^{2n-1}}{1+q^{2n-1}}-\sum_{n\geq0}\frac{(2n)q^{2n}}{1+q^{2n}}\right).\notag
 \end{align}
 They also proved the following beautiful congruences. For $n\geq0$, 
 \begin{align}
  SOME(5n+2)&\equiv 0\pmod{5},\notag\\
 SOME(5n+4)&\equiv 0\pmod{5},\notag\\
 SOME(4n)&\equiv 0\pmod{4},\notag\\
 DSOME(4n)&\equiv 0\pmod{4}.\notag
       \end{align}
    \par Moreover, they also conjectured that if $24\lambda\equiv1\pmod{5^\alpha}$, then
\begin{align*}
    SOME(\lambda)\equiv0\pmod{5^\alpha}.
\end{align*}
\par
In this paper, we express the generating function \eqref{DSOMEa} of $DSOME(n)$ in a closed form (see Theorem \ref{Theorem1.1} below). That allows us to find some new congruences and internal congruences for $DSOME(n)$ modulo 4 and 8. We present the main results in the following four theorems.

 \begin{theorem}\label{Theorem1.1}We have
 \begin{align}   \sum_{n\geq0}DSOME(n)q^n=\frac{1}{8}\bigg(\frac{f_2}{f_1}-\frac{f_1^7}{f_2^3}\bigg)\label{DSOMEgf}.
 \end{align} 
 \end{theorem}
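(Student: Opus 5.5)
Since $(-q;q)_\infty=f_2/f_1$, Theorem \ref{Theorem1.1} is equivalent to the identity
\begin{align*}
\sum_{m\geq1}\frac{(-1)^{m-1}q^m}{(1+q^m)^2}=\frac18\left(1-\frac{f_1^8}{f_2^4}\right).
\end{align*}
I plan to prove this by expanding the left side as a Lambert series and comparing it with a known theta-function identity.

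\textbf{Step 1: rewrite the left side.} Expand $q^m/(1+q^m)^2=\sum_{k\geq1}(-1)^{k-1}kq^{mk}$. This gives
\begin{align*}
\sum_{m\geq1}\frac{(-1)^{m-1}q^m}{(1+q^m)^2}=\sum_{N\geq1}q^N\sum_{mk=N}(-1)^{m+k}k .
\end{align*}
The sign $(-1)^{m+k}$ equals $(-1)^{N+1}$ when $N$ is odd. When $N$ is even, it is $-1$ if exactly one of $m,k$ is odd and $+1$ if both are even.

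\textbf{Step 2: recognise the right side.} Note that $f_1^2/f_2=\varphi(-q)$, where $\varphi(q)=\sum_n q^{n^2}$. Hence $f_1^8/f_2^4=\varphi(-q)^4$. Jacobi's four-square theorem gives
\begin{align*}
\varphi(q)^4=1+8\sum_{n\geq1}\frac{nq^n}{1+(-q)^n}.
\end{align*}
Equivalently, the coefficient of $q^N$ in $\varphi(q)^4$ is $r_4(N)=8\sum_{d\mid N,\,4\nmid d}d$. Replacing $q$ by $-q$, the target identity becomes
\begin{align*}
\sum_{mk=N}(-1)^{m+k}k=-\tfrac18(-1)^N r_4(N)=(-1)^{N+1}\sum_{d\mid N,\,4\nmid d}d \qquad (N\ge1).
\end{align*}

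\textbf{Step 3: verify the coefficient identity.} Treat the two parities of $N$ separately.
\begin{itemize}
\item For $N$ odd, every pair has $m,k$ odd, so the left side is $\sigma(N)$. The right side is also $\sigma(N)$.
\item For $N=2^a M$ with $a\ge1$ and $M$ odd, the right side is $-\sigma(M)(1+2)=-3\sigma(M)$ when $a\geq2$, and $-3\sigma(M)$ also when $a=1$. So in both cases it is $-3\sigma(M)$.
\item On the left side, write $k=2^b d$ with $d\mid M$. The sign is $+1$ if $0<b<a$ and $-1$ if $b=0$ or $b=a$. The left side is therefore $\sigma(M)\big(\sum_{0<b<a}2^b-1-2^a\big)=\sigma(M)(2^a-2-1-2^a)=-3\sigma(M)$, which matches.
\end{itemize}

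The only real input is Jacobi's four-square formula in its Lambert-series form, which I will quote as classical. The main place where errors could creep in is the sign bookkeeping in Step 3 for even $N$. As a safeguard I will check the first few coefficients numerically: for $N=1,2,3,4$ both sides give $1,-3,4,-3$.
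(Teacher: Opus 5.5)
Your proof is correct and is essentially the paper's argument. Both reduce the statement, via $(-q;q)_\infty=f_2/f_1$ and $f_1^8/f_2^4=\varphi(-q)^4$, to Jacobi's four-square theorem; the paper quotes it in the Lambert-series form $\varphi(-q)^4=1-8\sum_{n\geq1}(-1)^{n-1}nq^n/(1+q^n)$ and substitutes it directly into Andrews and Ghosh Dastidar's second expression for the generating function.

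The only difference is that you start from the first expression and compare coefficients with $r_4(N)=8\sum_{d\mid N,\,4\nmid d}d$, and your sign bookkeeping for $N=2^aM$ is right. Summing your Step 1 double series over $m$ first gives $\sum_{k\geq1}(-1)^{k-1}kq^k/(1+q^k)$ at once, which is exactly the paper's series, so Step 3 is unnecessary.
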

\begin{theorem}\label{Theorem1.2}
For all $n\geq0$, we have
\begin{align}
DSOME(25n+5r+1)&\equiv0\pmod{4}, ~where~  1\leq r\leq4.\notag
\end{align}
\end{theorem}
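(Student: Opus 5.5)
The plan is to rewrite the closed form of Theorem~\ref{Theorem1.1} in terms of theta functions, reduce it modulo $4$ to a product whose coefficients count representations by a binary quadratic form, and then read off the congruence from the arithmetic of the prime $5$.

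\textbf{Step 1: factor the closed form.} Since $\varphi(-q)=f_1^2/f_2$, we have $f_1^7/f_2^3=(f_2/f_1)\,\varphi(-q)^4$. Theorem~\ref{Theorem1.1} therefore becomes
\[
\sum_{n\geq0}DSOME(n)q^n=\frac{1}{8}\,\frac{f_2}{f_1}\bigl(1-\varphi(-q)^4\bigr).
\]
Write $\varphi(-q)^2=1+4A(q)$ with $A(q)=\sum_{n\ge1}(-1)^n\bigl(d_{1,4}(n)-d_{3,4}(n)\bigr)q^n$, the classical two-squares formula with alternating sign. Then $1-\varphi(-q)^4=-8A(1+2A)$, and so
\[
\sum_{n\geq0}DSOME(n)q^n=-\frac{f_2}{f_1}\bigl(A+2A^2\bigr).
\]
Thus it suffices to understand $\frac{f_2}{f_1}(A+2A^2)$ modulo $4$.

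\textbf{Step 2: reduce modulo 4.} Use two facts.
\begin{itemize}[leftmargin=*]
\item $(1-x)^4\equiv(1-x^2)^2\pmod 4$, so $f_1^4\equiv f_2^2\pmod 4$, and hence $f_2/f_1\equiv f_1^3/f_2\pmod 4$.
\item $2A^2\equiv 2A(q^2)\pmod 4$.
\end{itemize}
Next express $A$ through theta functions, namely $4A=\varphi(-q)^2-1$, or directly as a Lambert series. Combined with Jacobi's identity $f_1^3=\sum_k(-1)^k(2k+1)q^{k(k+1)/2}$ and the standard $2$-dissections of $\varphi$ and $\psi$, the aim is to show that modulo $4$ the generating function equals $2$ times a product of two theta series, plus possibly a term that is even in $q$ and can be handled separately.

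The expected shape is
\[
\sum_n DSOME(n)q^n\equiv 2\sum_{x,y}q^{Q(x,y)+c}\pmod 4,
\]
with $Q$ equivalent to $x^2+2y^2$ after completing squares. In other words, $DSOME(n)\pmod 4$ should be governed by the number of representations of a linear expression $an+b$ by $x^2+2y^2$. The congruence pattern $n=25m+5r+1$ with $r\neq0$, i.e.\ $n\equiv1\pmod 5$ but $n\not\equiv1\pmod{25}$, suggests that $5\,\|\,(an+b)$ exactly for these $n$, for instance $an+b=8(n-1)$ or something similar.

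\textbf{Step 3: use that 5 is inert.} The prime $5$ is inert in $\mathbb{Q}(\sqrt{-2})$, because $-2\equiv3$ is a quadratic non-residue mod $5$. Hence $x^2+2y^2\equiv0\pmod5$ forces $5\mid x$ and $5\mid y$, so any represented integer is divisible by an even power of $5$. Therefore, for $n=25m+5r+1$ with $1\le r\le4$, the target number $an+b$ is divisible by exactly $5^1$, has no representation, and $DSOME(n)\equiv0\pmod4$.

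If the representation count turns out to be weighted, for example by $(-1)^x$ or $(2k+1)$ factors from Jacobi's identity, the same divisibility argument still applies, since the sum is empty. Alternatively, I would carry out a direct $5$-dissection of $f_1^3$, whose terms with exponent $\equiv 1\pmod 5$ come exactly from $2k+1\equiv0\pmod5$, and of $\varphi$ or $\psi$, and check that the only contributions in the progression $5n+1$ come from $(x,y)$ both divisible by $5$.

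\textbf{Main obstacle.} The hard step is Step 2: turning $\frac{f_2}{f_1}(A+2A^2)$ modulo $4$ into a single clean two-variable theta series. The factor $f_2/f_1$ is not itself a theta function, and $A$ carries the extra $(-1)^n$ twist. My first attempt would be to use $f_2/f_1\equiv f_1^3/f_2$ together with $A\equiv\sum q^{k^2+l^2}$-type parity information modulo $2$, and then check the resulting identity numerically on the first hundred coefficients before committing to it.
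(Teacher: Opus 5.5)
Your proposal has a genuine gap. Step~2, on which Step~3 entirely depends, is never carried out, and the shape you predict for it is false. From the definition, $DSOME(1)=1$, $DSOME(5)=3$ and $DSOME(7)=-1$. So $\sum DSOME(n)q^n$ has odd coefficients at odd exponents, and it cannot be congruent modulo $4$ to $2\sum q^{Q(x,y)+c}$ plus a series in $q^2$. Since $DSOME(1)$ is odd, not even the subseries $\sum DSOME(5n+1)q^n$ vanishes modulo $2$.

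Your own reductions in fact determine the odd part. Since $d_{1,4}(n)-d_{3,4}(n)$ is odd exactly when the odd part of $n$ is a square, you get $\sum DSOME(n)q^n\equiv \frac{f_2}{f_1}A\equiv f_1\sum_{m\ge1}\bigl(q^{m^2}+q^{2m^2}\bigr)\pmod 2$. The natural normalization is $N=24n+1$, and $5\,\|\,24n+1$ exactly when $n\equiv1\pmod 5$, $n\not\equiv1\pmod{25}$. In these terms the series counts representations of $N$ by $x^2+24y^2$ and $x^2+48y^2$ with $\gcd(x,6)=1$ and $x,y>0$. For $x^2+24y^2$ the prime $5$ \emph{splits} (in $\mathbb{Q}(\sqrt{-6})$), and such $N$ really are represented. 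For $n=6$, $145=11^2+24\cdot1^2=7^2+24\cdot2^2$, while $DSOME(6)=-4$.

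So already modulo $2$, the vanishing on the progressions $25n+5r+1$ comes from representations cancelling in pairs, not from an empty representation set. The ``$5$ is inert in $\mathbb{Q}(\sqrt{-2})$'' argument of Step~3 therefore has nothing to attach to. To rescue your route you would need an actual modulo-$4$ formula that includes this odd part. You would also need a pairing or Hecke-type argument at $5$ showing the cancellation persists modulo $4$. None of that is in the proposal.

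For comparison, the paper never looks for a theta-series model. It $5$-dissects the exact closed form via the Rogers--Ramanujan continued fraction, using \eqref{c1}, \eqref{c2}, Lemma~\ref{lemmaxy} and \eqref{Lemma4a}--\eqref{Lemma4b}. This gives the exact formula \eqref{c8} for $\sum DSOME(5n+1)q^n$, which it reduces modulo $4$ to \eqref{c7} and dissects again. It then shows each of the classes $r=1,\dots,4$ vanishes, using Watson's $2$-dissections \eqref{c5}, \eqref{watson} and Ramanujan's identity \eqref{ggq} in the form \eqref{lemmaG}.
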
 
\begin{theorem}\label{Theorem1.3}
    For all $n\geq0$, we have
    \begin{align} DSOME(625n+125r+26)&\equiv0\pmod{8},~where~  1\leq r\leq4.\notag
\end{align}
\end{theorem}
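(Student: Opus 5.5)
The plan is to combine the closed form of Theorem~\ref{Theorem1.1} with $5$-dissections, repeating the extraction step that (presumably) underlies Theorem~\ref{Theorem1.2} but carrying it one level deeper and one power of $2$ higher. By Theorem~\ref{Theorem1.1}, the claim is equivalent to the coefficients of $q^{625n+125r+26}$ in $f_2/f_1-f_1^7/f_2^3$ vanishing modulo $64$.

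The first step is to make this difference $2$-adically transparent. Write $\varphi(-q)=f_1^2/f_2=1+2A(q)$ with $A(q)=\sum_{k\ge1}(-1)^kq^{k^2}$. Since $f_1^7/f_2^3=f_1\varphi(-q)^3$ and $f_2/f_1=f_1/\varphi(-q)$, we get
\begin{align*}
\frac{f_2}{f_1}-\frac{f_1^7}{f_2^3}=\frac{f_2}{f_1}\bigl(1-\varphi(-q)^4\bigr)
=-8\,\frac{f_2}{f_1}\bigl(A+3A^2+4A^3+2A^4\bigr).
\end{align*}
Hence
\begin{align*}
\sum_{n\ge0}DSOME(n)q^n\equiv-\frac{f_2}{f_1}\bigl(A+3A^2+4A^3+2A^4\bigr)\pmod 8 .
\end{align*}
I would then simplify the right side further using $f_2\equiv f_1^2 \pmod 2$-type relations. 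For example, $4A^3\equiv 4A \pmod 8$ and $2A^4\equiv 2A^2\pmod 8$ up to terms divisible by $8$ coming from squaring, since $A^2\equiv A(q^2)\pmod 2$. This should reduce everything to a short combination such as $\frac{f_2}{f_1}\bigl(\alpha A+\beta A^2\bigr)$ with small constants, or equivalently to a few eta quotients like $f_2/f_1$, $f_1^3/f_2$ and $f_2^5/(f_1^3f_4^2)$ taken modulo $8$.

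The second step is to $5$-dissect each term and extract the progression $5n+1$. I would use Ramanujan's dissection $f_1=f_{25}\bigl(R^{-1}-q-q^2R\bigr)$ with $R=R(q^5)$ the Rogers--Ramanujan continued fraction quotient, the corresponding dissection of $1/f_1$, and the classical $5$-dissection of $\varphi(-q)$, i.e.\ of $A$, obtained by splitting $k$ according to $k^2\bmod 5$. The expected outcome, which is the same mechanism that gives Theorem~\ref{Theorem1.2} modulo $4$, is a congruence of the form
\begin{align*}
\sum_{n\ge0}DSOME(25n+1)q^n\equiv c\,F(q)\pmod 8,
\end{align*}
where $F$ is again (modulo $8$) one of the same handful of series. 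Combined with the fact that $F$ supported on $q^{5n}$ modulo $4$ gives Theorem~\ref{Theorem1.2}, this is the basis for the iteration.

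The third step iterates. Applying the same dissection to $F$, I expect to find that $\sum_n DSOME(625n+26)q^n$ is, modulo $8$, a function of $q^5$ alone, or at least that its components in the classes $q^{5n+r}$, $1\le r\le4$, carry an extra factor $2$ compared with the level-one situation. This gives the vanishing of $DSOME(625n+125r+26)$ modulo $8$, since $625n+125r+26=25(25n+5r+1)+1$.

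The main obstacle is controlling the cross terms modulo $8$ rather than modulo $4$. At the first level one can discard everything divisible by $4$, but here the terms $3A^2$ and the products of the $R$-dissection components contribute at order $2$ and $4$. These have to be tracked exactly, and one must show that they either lie in $q^{5n}$-classes or cancel in pairs. My first attempt would be to rewrite the relevant pieces as theta products (using $f_1^3=\sum(-1)^k(2k+1)q^{k(k+1)/2}$ and the Jacobi identity for $A$). Then I would verify the needed vanishing via the quadratic-residue argument on exponents: $k^2\not\equiv$ the required residue $\pmod 5$. This avoids having to compute the $R$-expansions to full precision.
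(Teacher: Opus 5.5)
The overall shape of your plan (closed form, then repeated $5$-dissection and extraction) matches the paper. However, several of your concrete claims are wrong, and the decisive computation is missing.

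\textbf{False congruences for $A$.} The simplifications $4A^3\equiv 4A$ and $2A^4\equiv 2A^2\pmod 8$ fail. They would require $A^3\equiv A\pmod 2$ and $A^4\equiv A^2\pmod 4$ coefficientwise. But $A=-q+\cdots$ while $A^3=-q^3+\cdots$, and $A^2=q^2+\cdots$ while $A^4=q^4+\cdots$. The rule $a^3\equiv a\pmod 2$ holds for integers, not for power series; only $A^2\equiv A(q^2)\pmod 2$ is available.

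\textbf{Indexing off by one power of $5$.} Since $625n+125r+26=125(5n+r)+26$, Theorem~\ref{Theorem1.3} says exactly that $\sum_n DSOME(125n+26)q^n$ is, modulo $8$, a series in $q^5$. The paper proves this in \eqref{mod8gen}: $\sum_n DSOME(125n+26)q^n\equiv 13\sum_n DSOME(n)q^{5n}-2f_{10}/f_5\pmod 8$.
\begin{itemize}
\item Your target, that $\sum_n DSOME(625n+26)q^n$ is a series in $q^5$ modulo $8$, would concern $DSOME(3125n+625r+26)$ instead. It is also false: by \eqref{eq123} that series is congruent to $5\sum_n DSOME(n)q^n-2f_2/f_1$, whose $q^1$-coefficient is $3$.
\item Likewise, Theorem~\ref{Theorem1.2} says that $\sum_n DSOME(5n+1)q^n$ is a series in $q^5$ modulo $4$. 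The $25n+1$ series is not: it begins $1+22q+\cdots$. Indeed the coefficients of $q^{26}$ in $f_2/f_1$ and $f_1^7/f_2^3$ are $165$ and $-11$, so Theorem~\ref{Theorem1.1} gives $DSOME(26)=22$.
\end{itemize}

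\textbf{Missing mechanism.} The heart of the proof appears in your text only as an expectation. That heart is an explicit modulo $8$ form of $\sum_n DSOME(25n+1)q^n$, and then the absence of the classes $q^{5n+r}$, $1\le r\le 4$, from the series obtained by extracting its $q^{5n+1}$ terms. The tool you propose cannot supply this. Every term of $-\frac{f_2}{f_1}\bigl(A+3A^2+4A^3+2A^4\bigr)$ carries the factor $f_2/f_1$. Its $5$-dissection (via \eqref{c1} with $q\to q^2$, and \eqref{c2}) involves both $T(q^5)$ and $T(q^{10})$. After extraction you face Laurent polynomials in $x=T(q)$ and $y=T(q^2)$, further multiplied by the dissection pieces of powers of $A$. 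A quadratic-residue test on exponents says nothing about such products, because $f_2/f_1$ is not a lacunary theta series.

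This is exactly what the paper's machinery handles:
\begin{itemize}
\item It first uses \eqref{Lemma4a} so that only $f_2/f_1$ and $f_1/f_2$ (times series in $q^5$) need dissecting.
\item It then collapses the extracted combinations $x^3y+q^2/(x^3y)$, $x^2/y-y/x^2$, $xy^2-q^2/(xy^2)$, $y^3/x+q^2x/y^3$ and $x^5-q^2/x^5$ to eta quotients. The tools are the Baruah--Begum relations of Lemma~\ref{lemmaxy}, the factorization \eqref{qq12}, and \eqref{Lemma4a}--\eqref{Lemma4b}.
\item This yields exact formulas for $\sum_n DSOME(5n+1)q^n$ and $\sum_n DSOME(25n+1)q^n$, and finally \eqref{mod8gen}.
\end{itemize}
Without these relations between $T(q)$ and $T(q^2)$, or some substitute for them, your second and third steps remain conjectural. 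As written, the proposal does not prove the theorem.
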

\begin{theorem}\label{Theorem1.4}{For all $n\geq0$, we have}
      \begin{align*} DSOME(15625n+651)&\equiv 5DSOME(25n+1)+DSOME(625n+26)\\
  &\quad-5DSOME(n)\pmod{8}\\\intertext{and}
 DSOME(15625n+3125r+651)&\equiv 5DSOME(25n+5r+1)\\
&\quad+DSOME(625n+125r+26)\pmod{8},
\end{align*}
where  $1\leq r\leq4$.
\end{theorem}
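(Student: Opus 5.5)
The plan is to work entirely from the closed form in Theorem~\ref{Theorem1.1}, with every index normalized by $n\mapsto 24n+1$. The progressions in the statement are exactly the ones that behave well under $5$-power rescaling: $24(25n+1)+1=25(24n+1)$, $24\cdot 26+1=5^4$ and $24\cdot 651+1=5^6$. So if we put $c(24n+1):=DSOME(n)$, the first congruence reads
\[
c(5^6m)\equiv 5c(5^2m)+c(5^4m)-5c(m)\pmod 8 .
\]
This is a three-term linear recurrence for the operator ``multiply the argument by $25$''. It should follow from a degree-three characteristic polynomial obtained by Cayley--Hamilton on a small module of $q$-series, in the spirit of Hecke-type relations.

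\textbf{Step 1: a convenient form mod 8.} Since $\varphi(-q)=f_1^2/f_2$, Theorem~\ref{Theorem1.1} becomes
\[
8\sum DSOME(n)q^n=\frac{f_2}{f_1}\Bigl(1-\frac{f_1^8}{f_2^4}\Bigr)=\frac{f_2}{f_1}\bigl(1-\varphi(-q)^4\bigr).
\]
Jacobi's four-square theorem gives $\varphi(-q)^4=1+8\sum_{n\ge1}(-1)^n\sigma^*(n)q^n$, where $\sigma^*(n)=\sum_{d\mid n,\,4\nmid d}d$. Hence
\[
\sum DSOME(n)q^n=-\frac{f_2}{f_1}\sum_{n\ge1}(-1)^n\sigma^*(n)q^n .
\]
Now only ordinary congruences mod $8$ of the two factors are needed. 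I would further simplify $f_2/f_1$ mod $8$ using $f_1^8\equiv f_2^4\pmod 8$ (so $f_2/f_1\equiv f_1^7/f_2^3$), together with the mod $8$ behaviour of $\sigma^*$ on the relevant classes. The aim is to bring the series into a shape whose $5$-dissection is classical.

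\textbf{Step 2: $5$-dissection machinery.} Use Ramanujan's dissection
\[
f_1=f_{25}\bigl(R^{-1}-q-q^2R\bigr),\qquad R=\frac{(q^{10},q^{15};q^{25})_\infty}{(q^5,q^{20};q^{25})_\infty},
\]
and the corresponding dissection for $1/f_1$ via $R^5$ and the modular equation relating $R^{-5}-11q^5-q^{10}R^5$ to $f_5^6/f_{25}^6$. Combined with the multiplicativity of $\sigma^*$ at $5$, namely $\sigma^*(5^2m)=31\sigma^*(m)-5\sigma^*(m/5)$-type relations on the Eisenstein part, these let us define
\[
G_k(q)=\sum_n DSOME\Bigl(5^{2k}n+\tfrac{5^{2k}-1}{24}\Bigr)q^n
\]
and compute $G_{k+1}$ from $G_k$ by applying $U_5$ twice.

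I expect, mod $8$, that $G_k$ lies in a fixed finitely generated $\mathbb{Z}/8$-module spanned by a few eta-quotient-times-Eisenstein terms. The step $G_k\mapsto G_{k+1}$ is then given by a fixed matrix $M$, and Cayley--Hamilton for $M$ mod $8$ should yield $G_3\equiv 5G_1+G_2-5G_0$, which is the first congruence.

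\textbf{Step 3: the residues $r=1,\dots,4$.} For the second family I would rerun the same computation but stop one $U_5$ short. The terms $3125r+651$, $125r+26$ and $5r+1$ are the nonzero residue classes in the last dissection. The $-5DSOME(n)$ term drops out because the corresponding component of the starting vector lives only in the class $r=0$. This is consistent with Theorem~\ref{Theorem1.2}, which already forces the relevant $5$-dissection components to vanish mod $4$, and with Theorem~\ref{Theorem1.3}.

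\textbf{Main obstacle.} The hard part is Step 2: choosing a basis that is genuinely closed under the double $U_5$ mod $8$, and verifying the matrix entries through the modular equation for $R$. I would first try a basis of the form $q^j f_5^a f_{25}^b/\bigl(f_{10}^c f_{50}^d\bigr)$ times $\sum\sigma^*$-series. If this is not closed, I would fall back to a computer check of the coefficients to guess $M$ and then prove the needed identities via Sturm's bound for the associated modular forms.
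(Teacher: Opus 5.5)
Your normalization is correct ($24\cdot 26+1=5^4$, $24\cdot 651+1=5^6$), and the theorem does have the shape you predict. Put $\mathcal{U}\colon\sum a(n)q^n\mapsto\sum a(25n+1)q^n$, so that your $G_{k+1}=\mathcal{U}G_k$. Then the first congruence says $(\mathcal{U}-1)(\mathcal{U}^2-5)G_0\equiv 0\pmod 8$.

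\textbf{The gap.} Nothing in the proposal actually produces this polynomial. Step~2 names no module, no matrix and no verified entry, so the argument rests on the expectation that a closed basis exists. The missing idea is the explicit congruence \eqref{eq123},
\[
\sum_{n\ge0}DSOME(125n+26)q^n\equiv 5\sum_{n\ge0}DSOME(n)q^{5n}-2\frac{f_{10}}{f_5}\pmod 8 .
\]
The paper reads this off from \eqref{mod8gen}, the last step of the proof of Theorem~\ref{Theorem1.3}. That step is a long computation dissecting the eta quotients of Theorem~\ref{Theorem1.1} with \eqref{c1}, \eqref{c2}, Lemma~\ref{lemmaxy}, \eqref{Lemma4a}, \eqref{Lemma4b} and the factorization \eqref{qq12}.

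\textbf{How the pieces close up.} Taking the $q^{5n}$-part of \eqref{eq123} gives $G_2\equiv 5G_0-2f_2/f_1$. The second ingredient concerns the $q^{5n+1}$-part of $f_2/f_1$, divided by $q$ and with $q^5\to q$. By \eqref{qq12} and \eqref{Lemma4a} it equals $f_2^2f_5^3/(f_1^4f_{10})=f_{10}/f_5+4qf_2f_{10}^4/(f_1^3f_5^2)$, so $\mathcal{U}(2f_2/f_1)\equiv 2f_2/f_1\pmod 8$. Hence
\[
G_3=\mathcal{U}G_2\equiv 5G_1-2f_2/f_1\equiv 5G_1+G_2-5G_0 \pmod 8 .
\]
In your language, $\mathcal{U}$ preserves the span of $G_0$, $G_1$, $2f_2/f_1$, with characteristic polynomial $(X-1)(X^2-5)$. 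So the Cayley--Hamilton picture is right, but the basis must contain $G_1$ itself rather than guessed ``eta quotient times Eisenstein'' generators. Certifying $G_2\equiv 5G_0-2f_2/f_1$ is the real content of the proof.

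\textbf{Why your proposed tools would not supply this.}
\begin{itemize}
\item Hecke multiplicativity of $\sigma^*$ governs $U_5$ on the Eisenstein factor alone. (The correct relation is $\sigma^*(25m)=31\sigma^*(m)-30\sigma^*(m/5)$, not $-5$.) But $U_5$ does not commute with multiplication by $f_2/f_1$. So it tells you nothing about the $5$-dissection of $-\frac{f_2}{f_1}\sum(-1)^n\sigma^*(n)q^n$.
\item For the second family, set $A_r=\sum DSOME(25n+5r+1)q^n$. The claim is $(\mathcal{U}^2-\mathcal{U}-5)A_r\equiv 0\pmod 8$. The $A_r$ are $q^{5n+r}$-components at the odd level and lie outside the span of $G_0$, $G_1$, $2f_2/f_1$. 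A relation for iterates of $\mathcal{U}$ on $G_0$ therefore does not reach them.
\item What you need there is a single-$U_5$ congruence such as \eqref{y1}. The paper gets it by taking the $q^{5n+1}$-part of $G_2\equiv 5G_0-2f_2/f_1$, namely $\sum DSOME(3125n+651)q^n\equiv 5\sum DSOME(5n+1)q^n-2f_{10}/f_5\pmod 8$, and subtracting \eqref{eq123}.
\item The Sturm-bound fallback is not a routine check here. The generating function mixes a weight-$0$ eta quotient with a pole at a cusp and a weight-$2$ one. The factor $\frac18$ also turns the claim into a congruence modulo $64$ for the integral numerators. A holomorphic single-weight setting would have to be built before a finite coefficient check proves anything.
\end{itemize}
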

We organize the paper in the following way. In Sect. \ref{sec2}, we present some preliminary definitions and lemmas that are useful for proving our theorems. In Sect. \ref{sec3}, we prove Theorems \ref{Theorem1.1}--\ref{Theorem1.4}. In the final section, we propose some conjectural congruences for $DSOME(n)$ modulo 8 and 16.

\section{Preliminary definitions and lemmas}\label{sec2}
The Rogers-Ramanujan identities are given by
\begin{align}
G(q):&=\sum_{n=0}^{\infty}\frac{q^{n^2}}{(q;q)_n}=\frac{1}{(q;\;q^5)_\infty(q^4;\;q^5)_\infty}\label{Gq}\\
\intertext{and}    H(q):&=\sum_{n=0}^{\infty}\frac{q^{n^2+n}}{(q;q)_n}=\frac{1}{(q^2;\;q^5)_\infty(q^3;\;q^5)_\infty},\label{Hq}\end{align}
where $G(q)$ and $H(q)$ are called the Rogers-Ramanujan functions. Recall from \cite{rogers} that \begin{align} R(q)=q^{1/5}\dfrac{H(q)}{G(q)},\notag\end{align}where $R(q)$ is the  famous Rogers-Ramanujan continued fraction defined by
\begin{align*}
 R(q):&=\frac{q^{1/5}}{1}\raisebox{-2.6mm}{\(+\)}\frac{q}{1}\raisebox{-2.6mm}{\(+\)}\frac{q^2}{1}\raisebox{-2.6mm}{\(+\cdots\)},~|q|<1. \end{align*}

Some useful results are presented in the following lemmas. For the first two lemmas, we set 
\begin{align}\label{TGH}T(q):=\frac{q^{1/5}}{R(q)}=\dfrac{G(q)}{H(q)}.\end{align}
\begin{lemma}\label{lemmaxy}\textnormal{(Baruah and Begum \cite{ND})}
    If $x:=T(q)$ and $y:=T(q^2),$ then
    \begin{align}
        xy^2-\frac{q^2}{xy^2}&=K,\label{Lemma3a}\\
        \frac{x^2}{y}-\frac{y}{x^2}&=\frac{4q}{K},\label{Lemma3b}\\
        \frac{y^3}{x}+q^2\frac{x}{y^3}&=K+\frac{4q^2}{K}-2q,\label{Lemma3c}\\
        x^3y+\frac{q^2}{x^3y}&=K+\frac{4q^2}{K}+2q,\label{Lemma3d}\\
        x^5-\dfrac{q^2}{x^5}&=\dfrac{f_1^6}{f_5^6}+11q,\label{x51byx5}
    \end{align}
    where   $K=f_2f_5^5/(f_1f_{10}^5).$
\end{lemma}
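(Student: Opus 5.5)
The five identities are not independent. I would prove \eqref{x51byx5} directly from a classical identity, prove \eqref{Lemma3a} and \eqref{Lemma3b} as genuine modular identities, and then get \eqref{Lemma3c} and \eqref{Lemma3d} by pure algebra.

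\emph{Step 1: \eqref{x51byx5}.} Since $x=T(q)=q^{1/5}/R(q)$, we have $x^5=q/R(q)^5$ and $q^2/x^5=qR(q)^5$. Hence
\[
x^5-\frac{q^2}{x^5}=q\Big(\frac{1}{R(q)^5}-R(q)^5\Big).
\]
Ramanujan's classical identity $R(q)^{-5}-11-R(q)^5=f_1^6/(qf_5^6)$ then gives \eqref{x51byx5} immediately.

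\emph{Step 2: \eqref{Lemma3c} and \eqref{Lemma3d} from \eqref{Lemma3a} and \eqref{Lemma3b}.} Put $A=xy^2$ and $B=x^2/y$. Then $x^3y=AB$ and $y^3/x=A/B$. The hypotheses read $A-q^2/A=K$ and $B-1/B=4q/K$.

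Multiplying these gives
\[
\Big(AB+\frac{q^2}{AB}\Big)-\Big(\frac AB+\frac{q^2B}{A}\Big)=4q .
\]
For the sum of the two brackets, note it equals $(A+q^2/A)(B+1/B)$. Squaring,
\[
\Big(A+\frac{q^2}{A}\Big)^2\Big(B+\frac1B\Big)^2=(K^2+4q^2)\Big(\frac{16q^2}{K^2}+4\Big)=\frac{4(K^2+4q^2)^2}{K^2}.
\]
So $(A+q^2/A)(B+1/B)=\pm2(K+4q^2/K)$. The sign is $+$, by comparing leading terms of the $q$-expansions: $A\sim1$, $B\sim1$, $K\sim1$.

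Adding and subtracting the sum and the difference gives exactly \eqref{Lemma3c} and \eqref{Lemma3d}.

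\emph{Step 3: \eqref{Lemma3a} and \eqref{Lemma3b}, the main obstacle.} Write $G,H$ via \eqref{Gq}, \eqref{Hq} as products over residue classes mod $5$, so that $x$ and $y$ are quotients of generalized Dedekind eta products of level $10$.

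For \eqref{Lemma3a}, clear denominators. The identity becomes
\[
G(q)^2G(q^2)^4-q^2H(q)^2H(q^2)^4=K\,G(q)H(q)G(q^2)^2H(q^2)^2 .
\]
Dividing by the right-hand side, the claim is that a certain combination of generalized eta quotients equals $1$. Each term is a modular function on $\Gamma_1(10)$. I would compute its orders at the cusps of $\Gamma_1(10)$ using the standard order formula for generalized eta quotients, obtain a Sturm-type bound, and then verify enough $q$-coefficients.

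\eqref{Lemma3b} is treated the same way, in the form $K(x^4-y^2)=4qx^2y$.

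Alternatively, I would first try to derive both identities from Ramanujan's modular relation between $u=R(q)$ and $v=R(q^2)$, namely
\[
\frac{v-u^2}{v+u^2}=uv^2,
\]
combined with the product formulas for $G(q)G(q^4)\pm qH(q)H(q^4)$ from Ramanujan's forty identities, which express these in terms of $f_1,f_2,f_5,f_{10}$. This would identify $K$.

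The cusp bookkeeping, or locating the right pair of forty-identity relations, is where I expect the real work to lie. The rest is mechanical.
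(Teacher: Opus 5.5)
The paper does not prove this lemma. It quotes it from Baruah and Begum \cite{ND} and uses it as a black box, so your proposal can only be judged on its own.

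\textbf{What is correct.} Steps 1 and 2 are correct as reductions.
\begin{itemize}
\item In Step 1, $x^5=q/R(q)^5$ and $q^2/x^5=qR(q)^5$, so \eqref{x51byx5} is exactly Ramanujan's identity $R^{-5}(q)-11-R^5(q)=f_1^6/(qf_5^6)$.
\item In Step 2, $AB=x^3y$ and $A/B=y^3/x$ are right. Multiplying the two hypotheses shows the left side of \eqref{Lemma3d} minus the left side of \eqref{Lemma3c} equals $4q$, and the squared sum is $4(K^2+4q^2)^2/K^2$.
\item Your sign argument is rigorous: $S^2=P^2$ in the integral domain $\mathbb{Z}[[q]]$ forces $S=\pm P$, and both constant terms equal $2$.
\end{itemize}
So you have correctly reduced everything to \eqref{Lemma3a} and \eqref{Lemma3b}.

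\textbf{The gap.} These two identities are the only part of the lemma with real content, and they are not proved; Step 3 only names a method.
\begin{itemize}
\item The valence-formula route is legitimate. After clearing denominators, $G(q)^2G(q^2)^4$, $q^2H(q)^2H(q^2)^4$ and $KG(q)H(q)G(q^2)^2H(q^2)^2=f_5^6/(f_1^2f_2f_{10}^3)$ all carry the same factor $q^{1/6}$, so the normalized quotients are weight-zero generalized eta quotients. But until you compute the cusp orders and check coefficients up to the resulting bound, nothing has been verified.
\item Your fallback route does not work as described. The forty-identity formulas $G(q)G(q^4)+qH(q)H(q^4)=f_2^4/(f_1^2f_4^2)$ and $G(q)G(q^4)-qH(q)H(q^4)=f_{10}^5/(f_2f_5^2f_{20}^2)$ involve $f_4$ and $f_{20}$, not just $f_1,f_2,f_5,f_{10}$. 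They also relate $T(q)$ to $T(q^4)$ rather than to $T(q^2)$. So they cannot by themselves produce the level-$10$ quotient $K$.
\end{itemize}

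\textbf{Halving the remaining work.} Put $u=R(q)$ and $v=R(q^2)$. Then $xy^2-q^2/(xy^2)=q\bigl(\frac{1}{uv^2}-uv^2\bigr)$ and $\frac{x^2}{y}-\frac{y}{x^2}=\frac{v}{u^2}-\frac{u^2}{v}$. The modular relation $\frac{v-u^2}{v+u^2}=uv^2$ that you quote gives, identically,
\[
\Bigl(\frac{1}{uv^2}-uv^2\Bigr)\Bigl(\frac{v}{u^2}-\frac{u^2}{v}\Bigr)=\frac{4u^2v}{v^2-u^4}\cdot\frac{v^2-u^4}{u^2v}=4 .
\]
Hence \eqref{Lemma3a} and \eqref{Lemma3b} are equivalent. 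Together with your Steps 1 and 2, the whole lemma reduces to the single identity
\[
\frac{1}{R(q)R^2(q^2)}-R(q)R^2(q^2)=\frac{f_2f_5^5}{qf_1f_{10}^5}.
\]
This still has to be established, either by carrying out the modular-function verification or by citing it, as the paper effectively does via \cite{ND}.
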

In the next lemma, we present two well-known 5-dissections of $f_1$ and $1/f_1$.
\begin{lemma}\textnormal{\cite[ p.~165]{spirit}} We have
    \begin{align}
       \label{c1} f_1&=f_{25}\left(T(q^5)-q-\frac{q^2}{T(q^5)}\right)
  \intertext{ and} 
        \label{c2}\frac{1}{f_1}&=\frac{f_{25}^5}{f_5^6}\bigg(T(q^5)^4+qT(q^5)^3+2q^2T(q^5)^2+3q^3T(q^5)+5q^4-\frac{3q^5}{T(q^5)}\\
        &\quad+\frac{2q^6}{T(q^5)^2}-\frac{q^7}{T(q^5)^3}+\frac{q^8}{T(q^5)^4}\bigg)\notag.
    \end{align}
\end{lemma}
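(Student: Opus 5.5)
The dissection \eqref{c1} comes from Euler's pentagonal number theorem and the Jacobi triple product. Then \eqref{c2} follows from \eqref{c1} by the standard trick of multiplying $f_1$ by its conjugates under $q\mapsto \zeta q$, where $\zeta$ is a primitive fifth root of unity.

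\emph{Step 1: the dissection \eqref{c1}.} Start from
$$f_1=\sum_{n=-\infty}^{\infty}(-1)^nq^{n(3n-1)/2}.$$
The residues of $n(3n-1)/2$ modulo $5$ are $0,1,2$, coming from $n\equiv 0,4$, $n\equiv 1,3$ and $n\equiv 2$ respectively. Write $n=5m+k$ and collect the terms by residue class. Each of the three partial sums is a theta function in $q^5$, which the Jacobi triple product converts to an infinite product.
\begin{itemize}
\item The class with exponent $\equiv 2 \pmod 5$ is a single class, $n\equiv 2$. It gives $-q^2$ times a sum that equals $f_{25}$ divided by a product, and via \eqref{Gq} and \eqref{Hq} this becomes $-q^2 f_{25}H(q^5)/G(q^5)$.
\item The class with exponent $\equiv 1 \pmod 5$ combines $n\equiv 1$ and $n\equiv 3$ and gives exactly $-qf_{25}$.
\item The class with exponent $\equiv 0 \pmod 5$ gives $f_{25}G(q^5)/H(q^5)$.
\end{itemize}
Using the definition \eqref{TGH} of $T$, the three pieces assemble to $f_{25}\bigl(T(q^5)-q-q^2/T(q^5)\bigr)$, which is \eqref{c1}. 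The only real work in this step is matching the triple-product quotients to $(q^{5};q^{25})_\infty(q^{20};q^{25})_\infty$ and its companion $(q^{10};q^{25})_\infty(q^{15};q^{25})_\infty$. That is a routine check.

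\emph{Step 2: the dissection \eqref{c2}.} Set $T:=T(q^5)$, which is invariant under $q\mapsto\zeta q$. Then
$$\frac1{f_1}=\frac{\prod_{j=1}^{4}f_1(\zeta^jq)}{\prod_{j=0}^{4}f_1(\zeta^jq)}.$$
For the denominator, the elementary identity $\prod_{j=0}^4(1-\zeta^{jk}q^k)=(1-q^{5k})^5$ when $5\nmid k$ gives
$$\prod_{j=0}^{4}f_1(\zeta^jq)=\frac{f_5^6}{f_{25}}.$$
By Step 1, the numerator equals
$$f_{25}^4\prod_{j=1}^{4}\bigl(T-\zeta^jq-\zeta^{2j}q^2/T\bigr).$$
It remains to expand this product of four factors in powers of $q$ and check that it matches the bracket in \eqref{c2}.

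\emph{Step 3: expanding the product.} A clean route is to compute the full five-fold norm first. The product $\prod_{j=0}^{4}\bigl(T-\zeta^jq-\zeta^{2j}q^2/T\bigr)$ is symmetric under $q\mapsto\zeta q$, so only powers of $q^5$ survive. A direct computation gives
$$T^5-11q^5-\frac{q^{10}}{T^5}.$$
By \eqref{x51byx5} with $q$ replaced by $q^5$, this equals $f_5^6/f_{25}^6$, which is consistent with the denominator computed in Step 2.

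The four-fold product is then this norm divided by $T-q-q^2/T$. Since the quotient is a Laurent polynomial in $T$ and $q$, polynomial long division produces it. The division should give
$$T^4+qT^3+2q^2T^2+3q^3T+5q^4-\frac{3q^5}{T}+\frac{2q^6}{T^2}-\frac{q^7}{T^3}+\frac{q^8}{T^4}.$$
Combining with the prefactor $f_{25}^4\cdot f_{25}/f_5^6=f_{25}^5/f_5^6$ yields \eqref{c2}.

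The main obstacle is the bookkeeping in Step 3: computing the norm $T^5-11q^5-q^{10}/T^5$ correctly and then carrying out the long division. I would verify the result by multiplying the claimed quotient back by $T-q-q^2/T$ and comparing coefficients of each monomial $q^aT^b$.
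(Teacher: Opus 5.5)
The paper does not prove this lemma; it quotes it from Berndt's book, so your argument has to stand on its own. Steps 2 and 3 are correct. With $u=q/T$ you have $T-\zeta^jq-\zeta^{2j}q^2/T=T(1-\alpha\zeta^ju)(1-\beta\zeta^ju)$, where $\alpha+\beta=1$ and $\alpha\beta=-1$. The five-fold norm is therefore $T^5\bigl(1-(\alpha^5+\beta^5)u^5-u^{10}\bigr)=T^5-11q^5-q^{10}/T^5$, and multiplying your claimed quotient by $T-q-q^2/T$ does return this norm. There is one slip. For $5\nmid k$, $\prod_{j=0}^{4}(1-\zeta^{jk}q^k)=1-q^{5k}$, not $(1-q^{5k})^5$; the fifth power $(1-q^k)^5$ occurs when $5\mid k$. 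With this correction the denominator is $f_5\cdot f_5^5/f_{25}=f_5^6/f_{25}$, as you state.

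The gap is in Step 1, which carries all the real content of the lemma.

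First, your residue bookkeeping is wrong. One has $n(3n-1)/2\equiv 0,1,0,2,2\pmod 5$ for $n\equiv 0,1,2,3,4\pmod 5$. So the lone class is $n\equiv 1$, which gives $-q\sum_m(-1)^mq^{25m(3m+1)/2}=-qf_{25}$ directly from Euler's theorem. The pieces $f_{25}T(q^5)$ and $-q^2f_{25}/T(q^5)$ come from the pairs $n\equiv 0,2$ and $n\equiv 3,4$. Your single class $n\equiv 2$ has every exponent divisible by $5$, so it cannot produce the $q^2$-piece.

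Second, and more seriously, each two-class piece is not a single theta series. The Jacobi triple product only turns it into a sum of two products. For example, writing $(a_1,\dots,a_k;p)_\infty=\prod_i(a_i;p)_\infty$,
\[
\sum_{n\equiv 0,2\ (\mathrm{mod}\ 5)}(-1)^nq^{n(3n-1)/2}=(q^{35},q^{40},q^{75};q^{75})_\infty+q^5(q^{10},q^{65},q^{75};q^{75})_\infty .
\]
Showing this equals
\[
f_{25}T(q^5)=(q^{25},-q^5,-q^{20};q^{25})_\infty(q^{15},q^{35};q^{50})_\infty
\]
is not a routine matching of factors. It is exactly the quintuple product identity
\[
(p,x,p/x;p)_\infty(px^2,p/x^2;p^2)_\infty=\sum_{n=-\infty}^{\infty}p^{n(3n+1)/2}\bigl(x^{-3n}-x^{3n+1}\bigr)
\]
with $p=q^{25}$ and $x=-q^5$. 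The $n\equiv 3,4$ piece needs the same identity with $x=-q^{10}$. Without the quintuple product identity, or an equivalent theta-function identity, Step 1 does not establish \eqref{c1}. Step 2 depends on \eqref{c1}, so the proof of \eqref{c2} is incomplete as well.
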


The following gives 2-dissections of $G(q)$ and $H(q)$ due to Watson \cite[p.~60]{watson}.
\begin{lemma}We have
\begin{align}
    G(q)=\frac{f_8}{f_2}\left(G\left(q^{16}\right)+qH(-q^4)\right)\label{c5}\\\intertext{and}
    H(q)=\frac{f_8}{f_2}\left(G(-q^{4})+q^3H(q^{16})\right)\label{watson}.
\end{align}
    \end{lemma}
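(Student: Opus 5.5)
The lemma is Watson's classical result, and the paper only cites it. If I had to supply a proof, I would do it at the level of theta functions, using the Jacobi triple product throughout. Write Ramanujan's theta function as $\varphi(a,b)=\sum_{n\in\mathbb{Z}}a^{n(n+1)/2}b^{n(n-1)/2}=(-a;ab)_\infty(-b;ab)_\infty(ab;ab)_\infty$. The product forms \eqref{Gq} and \eqref{Hq} give
\[
G(q)=\frac{\varphi(-q^2,-q^3)}{f_1},\qquad H(q)=\frac{\varphi(-q,-q^4)}{f_1}.
\]
So each identity becomes a statement about a product of a theta function with $1/f_1$. Multiplying \eqref{c5} and \eqref{watson} by $f_2/f_8$, the goal is to show that $f_2G(q)/f_8$ has even part $G(q^{16})$ and odd part $qH(-q^4)$, and similarly for $H$.

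The steps, in order, are as follows.

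\textbf{(1)} Use the standard 2-dissection of $1/f_1$,
\[
\frac{1}{f_1}=\frac{f_8^5}{f_2^5f_{16}^2}+q\,\frac{f_4^2f_{16}^2}{f_2^5f_8}.
\]

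\textbf{(2)} Dissect the theta function by splitting the summation index into even and odd $n$:
\[
\varphi(a,b)=\varphi(a^3b,ab^3)+a\,\varphi(b/a,a^5b^3).
\]
With $a=-q^2,\ b=-q^3$ this gives $\varphi(-q^2,-q^3)=\varphi(q^9,q^{11})-q^2\varphi(q,q^{19})$. With $a=-q,\ b=-q^4$ it gives $\varphi(-q,-q^4)=\varphi(q^7,q^{13})-q\,\varphi(q^3,q^{17})$.

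\textbf{(3)} Multiply out and collect the even and odd powers of $q$. Each side is then a sum of two products of level-$20$ theta functions with eta quotients in $q^2$. The targets are also theta quotients, since $G(q^{16})=\varphi(-q^{32},-q^{48})/f_{16}$ and $H(-q^4)=\varphi(q^4,-q^{16})/(q^4;q^4)$ written with $-q^4$. So we must prove two quadratic theta-function identities, one for the even part and one for the odd part.

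\textbf{(4)} Prove these identities either with the general product formula for $\varphi(a,b)\varphi(c,d)$ (Ramanujan's Entry 29 / Schröter-type formulas), or by recognizing them as the duplication-type relations already used for \eqref{Lemma3a}--\eqref{Lemma3d}.

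The main obstacle is step (4). After collecting terms, one must match sums of products like $\varphi(q^9,q^{11})\varphi(\cdot)$ against the single theta quotients on the right, and this requires choosing the correct Schröter pairing. I would first try Schröter's formula with parameters chosen so that the moduli $20$ and $2$ combine into modulus $80$. If the bookkeeping becomes unmanageable, my fallback is modular forms. Multiply both sides of each identity by a suitable eta quotient, such as $f_2f_{16}$ times powers of $f_5$ and $f_{80}$, to obtain holomorphic modular forms of some weight on $\Gamma_1(80)$. Then verify agreement of $q$-expansion coefficients up to the Sturm bound, which reduces the proof to a finite computation.

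A cheaper alternative for the $G$-identity is Rogers' identity $\sum_{n}q^{n^2}/(q^4;q^4)_n=(-q;q^2)_\infty G(q^4)$ (Slater (20)). Separating the even and odd $n$ in it yields relations of exactly the required shape. I would try this route only if the theta-product matching in step (4) resists.
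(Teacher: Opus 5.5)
\textbf{There is a genuine gap: the main route fails at step (1), and step (4), where all the content lies, is never carried out.} The paper gives no proof of this lemma (it is quoted from Watson), so your argument has to stand on its own.

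Step (1) is false. The term $f_8^5/(f_2^5f_{16}^2)$ contributes $5q^2$ (from $f_2^{-5}$), whereas the coefficient of $q^2$ in $1/f_1$ is $p(2)=2$. What you wrote is, apart from a missing factor $2$, the $2$-dissection of $1/f_1^2$, namely $1/f_1^2=f_8^5/(f_2^5f_{16}^2)+2q\,f_4^2f_{16}^2/(f_2^5f_8)$. The correct $2$-dissection of $1/f_1$ is, in your notation, $1/f_1=\varphi(q,q^3)/f_2^2=\bigl(\varphi(q^6,q^{10})+q\,\varphi(q^2,q^{14})\bigr)/f_2^2$. Its parts are not of eta-quotient type, so every product in step (3) acquires an extra theta factor.

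Step (2) is a correct identity, but it splits the summation index, not the powers of $q$. For example, $\varphi(q^9,q^{11})=\sum_n q^{10n^2-n}$ contains both $q^0$ and $q^9$, and the same happens for the other three series. So the claim in step (3) that you obtain level-$20$ theta functions times eta quotients in $q^2$ is wrong. To separate parities you must split the index modulo $4$, e.g. $\varphi(q^9,q^{11})=\varphi(q^{38},q^{42})+q^9\varphi(q^2,q^{78})$, which brings in modulus-$80$ series.

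Step (4) is not carried out. No specific Schr\"oter-type identity is written down or checked. The modular-forms fallback is legitimate in principle, but you give no forms, weight, level, multiplier system or Sturm bound, so it is a plan rather than a proof.

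Your ``cheaper alternative'' is actually the right route, but the identity you quote is false. Indeed, $\sum_{n\ge0}q^{n^2}/(q^4;q^4)_n=1+q+q^4+q^5+\cdots$ has no $q^3$ term, while $(-q;q^2)_\infty G(q^4)=1+q+q^3+\cdots$. Rogers' identities are
\[ \sum_{n\ge0}\frac{q^{n^2}}{(q^4;q^4)_n}=\frac{f_2}{f_4}\,G(q),\qquad \sum_{n\ge0}\frac{q^{n^2+2n}}{(q^4;q^4)_n}=\frac{f_2}{f_4}\,H(q). \]
Separating even and odd $n$ does not by itself produce the required shape. Writing $x=q^4$, you also need four mod-$20$ identities of Rogers (all in Slater's list):
\begin{gather*} \sum_{m\ge0}\frac{x^{m^2}}{(x;x)_{2m}}=\frac{G(x^4)}{(x;x^2)_\infty},\qquad \sum_{m\ge0}\frac{x^{m^2+m}}{(x;x)_{2m+1}}=\frac{H(-x)}{(x;x^2)_\infty},\\ \sum_{m\ge0}\frac{x^{m^2+m}}{(x;x)_{2m}}=\frac{G(-x)}{(x;x^2)_\infty},\qquad \sum_{m\ge0}\frac{x^{m^2+2m}}{(x;x)_{2m+1}}=\frac{H(x^4)}{(x;x^2)_\infty}. \end{gather*}
Note that $\frac{f_4}{f_2}\cdot\frac{1}{(q^4;q^8)_\infty}=\frac{f_8}{f_2}$. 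For the $G$-sum, even $n=2m$ contributes $x^{m^2}$ and odd $n=2m+1$ contributes $q\,x^{m^2+m}$; the first pair then turns the parity split into \eqref{c5}. For the $H$-sum, even $n$ contributes $x^{m^2+m}$ and odd $n$ contributes $q^3x^{m^2+2m}$; the second pair then turns the split into \eqref{watson}. With these citations that argument is complete; as written, the proposal does not prove the lemma.
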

   
    We use the identities in the following lemma quite frequently in the next sections.
\begin{lemma}\label{lemmabb}
\textnormal{(Baruah and Begum \cite[Eqs. (2.6) and (2.7)]{ND})}
    \begin{align}
        \frac{f_5^5}{f_1^4f_{10}^3}&=\frac{f_5}{f_2^2f_{10}}+4q\frac{f_{10}^2}{f_1^3f_2},\label{Lemma4a}\\
        \frac{f_2^3f_5^2}{f_1^2f_{10}^2}&=\frac{f_5^5}{f_1f_{10}^3}+q\frac{f_{10}^2}{f_2}.\label{Lemma4b}
     \end{align}
\end{lemma}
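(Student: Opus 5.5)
Both identities of Lemma~\ref{lemmabb} are relations among eta quotients of level $10$. I would prove them by dividing through by one term, which turns each identity into a linear relation among weight-$0$ eta quotients on $\Gamma_0(10)$, and then applying the standard ``check enough coefficients'' argument for modular functions. Write $\eta(\delta\tau)=q^{\delta/24}f_\delta$ with $q=e^{2\pi i\tau}$.

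\emph{Identity \eqref{Lemma4b}.} Dividing by $f_5^5/(f_1f_{10}^3)$ gives
\[
\frac{f_2^3f_{10}}{f_1f_5^3}=1+q\,\frac{f_1f_{10}^5}{f_2f_5^5}.
\]
In eta form this reads $A=1+B$ with $A=\eta_2^3\eta_{10}/(\eta_1\eta_5^3)$ and $B=\eta_1\eta_{10}^5/(\eta_2\eta_5^5)$. The $q$-powers match: $\sum\delta r_\delta/24$ equals $0$ for $A$ and $1$ for $B$.

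\emph{Identity \eqref{Lemma4a}.} Dividing by $f_5/(f_2^2f_{10})$ gives
\[
\frac{f_2^2f_5^4}{f_1^4f_{10}^2}=1+4q\,\frac{f_2f_{10}^3}{f_1^3f_5}.
\]
Again both sides become weight-$0$ eta quotients, and the $q$-exponents are consistent.

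For each quotient I would check the Gordon--Hughes--Newman (Ligozat) criteria for being a modular function on $\Gamma_0(10)$:
\begin{itemize}
\item $\sum r_\delta=0$;
\item $\sum\delta r_\delta\equiv0$ and $\sum (10/\delta) r_\delta\equiv0 \pmod{24}$;
\item $\prod\delta^{r_\delta}$ is a rational square.
\end{itemize}
If a quotient fails, I would pass to $\Gamma_0(20)$, or multiply both sides by a common auxiliary eta quotient to repair the conditions.

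Next I would use Ligozat's formula to compute the order of each function at the four cusps $\infty, 0, 1/2, 1/5$ of $\Gamma_0(10)$. This determines the total pole order $P$ of the difference (left side minus right side). By the valence formula, a modular function with at most $P$ poles that vanishes at $\infty$ to order greater than $P$ is identically zero. So it suffices to verify the $q$-expansions of both identities agree up to $q^{P}$, which is a finite, mechanical check.

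The main obstacle is the cusp-order bookkeeping. If several of the quotients have poles at different cusps, the bound $P$ grows and the coefficient check lengthens, and a wrong sign in one order invalidates the argument. I would therefore first tabulate the orders carefully for each of the three functions in each identity. If possible, I would choose the normalization (which term to divide by) so that all poles lie at a single cusp, such as $0$.

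As a cross-check, I would try a second, elementary route. The identity \eqref{Lemma4b} is $A=1+q/K$ with $K=f_2f_5^5/(f_1f_{10}^5)$, the same quantity appearing in Lemma~\ref{lemmaxy}. So one could try to derive it by rewriting $A$ in terms of $x=T(q)$ and $y=T(q^2)$ via the product formulas \eqref{Gq} and \eqref{Hq}, and then combining \eqref{Lemma3a}--\eqref{Lemma3d}. The modular argument, however, is the one I would commit to.
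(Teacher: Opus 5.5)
Your proposal is correct, and it takes a genuinely different route from the paper. The paper gives no proof of this lemma; it simply quotes it from Baruah and Begum \cite{ND}.

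Your reductions are right. Dividing through turns \eqref{Lemma4b} into $A=1+B$ with $A=\eta_2^3\eta_{10}/(\eta_1\eta_5^3)$ and $B=\eta_1\eta_{10}^5/(\eta_2\eta_5^5)$. It turns \eqref{Lemma4a} into $C=1+4D$ with $C=\eta_2^2\eta_5^4/(\eta_1^4\eta_{10}^2)$ and $D=\eta_2\eta_{10}^3/(\eta_1^3\eta_5)$.

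The bookkeeping you were worried about turns out to be very light:
\begin{itemize}
\item All four quotients satisfy the Newman--Ligozat conditions on $\Gamma_0(10)$ directly. Both congruences modulo $24$ hold, and $\prod\delta^{r_\delta}$ equals $16/25$, $16$, $25$, $400$ respectively. So no passage to $\Gamma_0(20)$ and no auxiliary factor is needed.
\item Ligozat's formula gives the following orders at the cusps $(\infty,0,1/2,1/5)$: $A\colon(0,0,1,-1)$, $B\colon(1,0,0,-1)$, $C\colon(0,-1,0,1)$, $D\colon(1,-1,0,0)$.
\item Each difference $A-1-B$ and $C-1-4D$ therefore has at most a single simple pole, at $1/5$ and at $0$ respectively, so $P=1$.
\item It then suffices to match the coefficients of $q^0$ and $q^1$. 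Indeed $A=1+q+O(q^2)=1+B$ and $C=1+4q+O(q^2)=1+4D$.
\end{itemize}

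Compared with the paper, your approach gives a self-contained proof of a few lines that anyone can check, and it removes the dependence on \cite{ND} for this lemma. The paper's citation only buys brevity.

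As a consistency check, note that $A=1+q/K$ and $1/C=1-4q/K$ are exactly the two factors multiplied in \eqref{qq12}. Your fallback via Lemma~\ref{lemmaxy} is unnecessary, since the modular argument is already complete.
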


We conclude this section by proving another useful result.
\begin{lemma} We have
    \begin{align}\label{lemmaG}
        G(q)G(q^4)\pm qH(q)H(q^4)\equiv \frac{1}{f_2}\pmod{2}.
    \end{align}
\end{lemma}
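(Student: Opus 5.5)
The plan is to avoid quoting any of Ramanujan's forty identities and to work directly from Watson's 2-dissections \eqref{c5} and \eqref{watson} modulo $2$. First observe that the two choices of sign in \eqref{lemmaG} differ by $2qH(q)H(q^4)\equiv 0\pmod 2$. Both sides are power series with integer coefficients, since $G$, $H$ and $1/f_2$ are products of the form $1/(a;q^5)_\infty$ or $1/(q^2;q^2)_\infty$. It therefore suffices to treat
\[
S(q):=G(q)G(q^4)+qH(q)H(q^4).
\]

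Modulo $2$ the signs inside the arguments disappear, because $G(-q^4)\equiv G(q^4)$ and $H(-q^4)\equiv H(q^4)$ coefficientwise. So \eqref{c5} and \eqref{watson} give
\[
G(q)\equiv \frac{f_8}{f_2}\bigl(G(q^{16})+qH(q^4)\bigr),\qquad H(q)\equiv \frac{f_8}{f_2}\bigl(G(q^{4})+q^3H(q^{16})\bigr)\pmod 2 .
\]
Substituting these into $S(q)$ and expanding, we get
\[
S(q)\equiv\frac{f_8}{f_2}\Bigl(G(q^4)G(q^{16})+qH(q^4)G(q^4)+qG(q^4)H(q^4)+q^4H(q^4)H(q^{16})\Bigr).
\]
The two middle terms are equal and cancel modulo $2$. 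What remains is exactly $S(q^4)$, so we obtain the functional congruence
\[
S(q)\equiv \frac{f_8}{f_2}\,S(q^4)\pmod 2 .
\]

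Now iterate this $N$ times to get
\[
S(q)\equiv \prod_{k=0}^{N-1}\frac{f_{2\cdot 4^{k+1}}}{f_{2\cdot 4^k}}\;S\bigl(q^{4^N}\bigr)=\frac{f_{2\cdot 4^N}}{f_2}\,S\bigl(q^{4^N}\bigr)\pmod 2,
\]
since the product telescopes. Because $G(0)=1$, both $f_{2\cdot 4^N}$ and $S(q^{4^N})$ are congruent to $1$ modulo $q^{4^N}$. Comparing coefficients of $q^m$ with $4^N>m$ and letting $N\to\infty$ gives $S(q)\equiv 1/f_2\pmod 2$, which is \eqref{lemmaG}.

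The only delicate point is the justification of the reduction modulo $2$. One needs that the prefactor $f_8/f_2$ and all the series involved have integer coefficients, so that products of congruences are legitimate and the sign changes $q\mapsto -q$ inside $G(-q^4)$ and $H(-q^4)$ are harmless. This is clear from the product forms \eqref{Gq} and \eqref{Hq}. Beyond that, the telescoping-limit argument is purely formal.
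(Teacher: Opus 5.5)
Your argument is correct, and it takes a different route from the paper's.

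The paper quotes one of Ramanujan's forty identities, $G(q)G(q^4)-qH(q)H(q^4)=f_{10}^5/(f_2f_5^2f_{20}^2)$, and reduces it modulo $2$ using $f_5^2\equiv f_{10}$ and $f_{20}^2\equiv f_{10}^4$. The $+$ sign then follows because the two signs differ by $2qH(q)H(q^4)$.

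You instead derive the congruence from Watson's 2-dissections \eqref{c5} and \eqref{watson}, which the paper already needs elsewhere. The cross terms $2qG(q^4)H(q^4)$ vanish modulo $2$, which gives the self-similarity $S(q)\equiv\frac{f_8}{f_2}S(q^4)$. Since $1/f_2$ satisfies the same relation exactly, iterating is a uniqueness argument for solutions with constant term $1$, and it forces $S(q)\equiv 1/f_2$. The expansion, the telescoping product $\prod_{k=0}^{N-1}f_{2\cdot4^{k+1}}/f_{2\cdot4^k}=f_{2\cdot4^N}/f_2$, and the limiting step comparing coefficients below $q^{4^N}$ are all sound.

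The two routes trade off as follows:
\begin{itemize}
\item The paper's proof is a one-line consequence of a deep exact identity, and for the minus sign it gives an identity, not just a congruence.
\item Your proof is self-contained within the paper's stated preliminaries and avoids the forty identities entirely. Its cost is that it only yields information modulo $2$, which is all the lemma and its later uses require.
\end{itemize}
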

\begin{proof}
 One of Ramanujan's famous forty identities for $G(q)$ and $H(q)$ (See \cite{bcbetal} for more details) is given by
    \begin{align}
        G(q)G(q^4)-qH(q)H(q^4)=\frac{f_{10}^5}{f_2f_5^2f_{20}^2}.\label{ggq}
    \end{align}
    However, by the binomial theorem, for positive integers $k$ and $\ell$, it can be easily shown that
\begin{align}
    f_{k}^{2^\ell}\equiv f_{2k}^{2^{\ell-1}}\pmod{2^\ell}.\label{binom}
\end{align}
 In particular,
 \begin{align*}
    f_1^2\equiv f_2\pmod{2}~\textup{and}~f_5^2\equiv f_{10}\pmod{2}.
\end{align*}
Thus, \eqref{lemmaG} readily follows from \eqref{ggq}.
\end{proof}

\begin{remark}
 We will frequently use \eqref{binom} without further reference to it.   
\end{remark} 

    \section{Proofs of Theorems \ref{Theorem1.1} --\ref{Theorem1.4}}\label{sec3}
  \begin{proof}[ Proof of Theorem \ref{Theorem1.1}.]
 We recall from \cite[p.~61, Eq. (3.3.6)]{spirit} that
 \begin{align}
        \dfrac{(q;q)_\infty^4}{(-q;q)_\infty^4}=1-8\sum_{n=1}^{\infty}\left(\frac{(2n-1)q^{2n-1}}{1+q^{2n-1}}-\sum_{n\geq0}\frac{2nq^{2n}}{1+q^{2n}}\right).\notag
  \end{align}
  Employing the above identity in \eqref{DSOMEa}, we have
  \begin{align*}
    \sum_{n\geq0}DSOME(n)q^n&=(-q;q)_\infty\bigg( \sum_{n\geq0}\frac{(2n-1)q^{2n-1}}{1+q^{2n-1}}-\sum_{n\geq0}\frac{2nq^{2n}}{1+q^{2n}}\bigg)\\
    &=\frac{1}{8}(-q,q)_\infty\bigg(1-\dfrac{(q;q)_\infty^4}{(-q;q)_\infty^4}\bigg)\\
    &=\frac{1}{8}\frac{f_2}{f_1}\bigg(1-\frac{f_1^8}{f_2^4}\bigg)\\
    &=\frac{1}{8}\bigg(\frac{f_2}{f_1}-\frac{f_1^7}{f_2^3}\bigg),  
\end{align*}
which is \eqref{DSOMEgf}.
 \end{proof}
\begin{proof}[Proof of Theorem \ref{Theorem1.2}] From  \eqref{DSOMEgf} and \eqref{Lemma4a}, 
 \begin{align*}
\sum_{n=0}^{\infty}DSOME(n)q^n
&=\frac{f_2}{8f_1}-\frac{1}{8}\left(\frac{f_1^3f_5^4}{f_2f_{10}^2}-4q\frac{f_1^4f_{10}^3}{f_2^2f_5}\right)\notag\\
&=\frac{f_2}{8f_1}-\frac{1}{8}\left(\frac{f_{2}f_{5}^{8}}{f_{1}f_{10}^{4}}-4qf_{5}^{3}f_{10}\right)+\frac{q}{2}\left(f_{5}^{3}f_{10}-4q\frac{f_{1}f_{10}^{6}}{f_{2}f_{5}^{2}}\right)\notag\\
&=\frac{f_2}{8f_1}-\frac{f_{2}f_{5}^{8}}{8f_{1}f_{10}^{4}}+qf_{5}^{3}f_{10}-2q^2\frac{f_{10}^{6}f_{1}}{f_{5}^{2}f_{2}},\end{align*}
which, by \eqref{c1} and \eqref{c2}, can be recast as
\begin{align}
\label{c67}&\sum_{n=0}^{\infty}DSOME(n)q^n\\
&=\frac{f_{25}^{5}f_{50}}{f_{5}^{6}}\left(\frac{1}{8}-\frac{f_{5}^{8}}{8f_{10}^{4}}\right)\left(T(q^{10})-q^2-\frac{q^4}{T(q^{10})}\right)\bigg(T^4(q^5)+qT^3(q^5)
\notag\\
&\quad+2qT^2(q^5)+3q^3T(q^5)+5q^4-3\frac{q^5}{T(q^5)}+2\frac{q^6}{T^2(q^5)}
-\frac{q^7}{T^3(q^5)}
+\frac{q^8}{T^4(q^5)}\bigg)\notag\\
&\quad+qf_{5}^{3}f_{10}-2q^2\frac{f_{25}f_{50}^{5}}{f_{5}^{2}}\left(T(q^5)-q-\frac{q^2}{T(q^5)}\right)\bigg(T^4(q^{10})+qT^3(q^{10})\notag\\
&\quad+2q^{2}T^2(q^{10})+3q^{6}T(q^{10})+5q^8-3\frac{q^{10}}{T(q^{10})}+
2\frac{q^{12}}{T^2(q^{10})}-\frac{q^{14}}{T^3(q^{10})}\notag\\
&\quad+\frac{q^{16}}{T^4(q^{10})}\bigg).\notag
\end{align}
Extracting the terms involving $q^{5n+1}$ from both sides of \eqref{c67}, dividing by $q$ and then replacing $q^5$ by $q$, we find that

\begin{align*}
&\sum_{n=0}^{\infty}DSOME(5n+1)q^n\notag\\
&=f_{1}^{3}f_{2}+\left(\frac{1}{8}-\frac{f_{1}^{8}}{8f_{2}^{4}}\right)\frac{f_{5}^{5}f_{10}}{f_{1}^{6}}\bigg(x^{3}y+\frac{q^2}{x^{3}y}-2q\left(\frac{x^2}{y}-\frac{y}{x^2}\right)
-5q\bigg)\\
&\quad-2\frac{f_{5}f_{10}^{5}}{f_{1}^{2}}\bigg(2q\left(xy^2-\frac{q^2}{xy^2}\right)-q\left(\frac{y^3}{x}+q^2\frac{x}{y^3}\right)
-5q^2\bigg),\end{align*}
which by \eqref{Lemma3a} - \eqref{Lemma3d} can be recast as
\begin{align}
\label{e1}&\sum_{n=0}^{\infty}DSOME(5n+1)q^n\\
&=f_1^3f_2+\frac{1}{8}\bigg(1-\frac{f_{1}^{8}}{8f_{2}^{4}}\bigg)\frac{f_{5}^{5}f_{10}}{f_{1}^{6}}\bigg(K-\frac{4q^2}{K}-3q\bigg)-2q\frac{f_{5}f_{10}^{5}}{f_{1}^{2}}
\bigg(K-\frac{4q^2}{K}-3q\bigg)\notag.
    \end{align}

Now, by \eqref{Lemma4a} and \eqref{Lemma4b}, we have
    \begin{align}
       \label{qq12} K-\frac{4q^2}{K}-3q&=\left(K+q\right)\left(1-\frac{4q}{K}\right)=\left(\frac{f_{2}f_{5}^{5}}{f_{1}f_{10}^{5}}+q\right)\left(1-4q\frac{f_{1}f_{10}^{5}}{f_{2}f_{5}^{5}}\right)\\
    &=\frac{f_{2}^{4}f_{5}^{2}}{f_{1}^{2}f_{10}^{4}}\times\frac{f_{1}^{4}f_{10}^{2}}{f_{2}^{2}f_{5}^{4}}=\frac{f_{1}^{2}f_{2}^{2}}{f_{5}^{2}f_{10}^{2}}.\notag
    \end{align}
  Employing \eqref{qq12} in \eqref{e1}, we have
    \begin{align}
&\sum_{n=0}^{\infty}DSOME(5n+1)q^n=f_{1}^{3}f_{2}+\frac{1}{8}\frac{f_{2}^{2}f_{5}^{3}}{f_{1}^{4}f_{10}}-\frac{1}{8}\frac{f_{1}^{4}f_{5}^{3}}{f_{2}^{2}f_{10}}-2q\frac{f_{2}^{2}f_{10}^{3}}{f_{5}}.\end{align}
Applying \eqref{Lemma4a} in the above, we find that
\begin{align}\label{c8}&\sum_{n=0}^{\infty}DSOME(5n+1)q^n\\
&=f_{1}^{3}f_{2}+\frac{1}{8}\left(\frac{f_{10}}{f_{5}}+4q\frac{f_{2}f_{10}^{4}}{f_{1}^{3}f_{5}^{2}}\right)-\frac{1}{8}\left(\frac{f_{5}^{7}}{f_{10}^{3}}-4q\frac{f_{1}f_{5}^{2}f_{10}^{2}}{f_{2}}\right)-2q\frac{f_{2}^{2}f_{10}^{3}}{f_{5}}\notag\\
&=f_{1}^{3}f_{2}+\frac{1}{8}\frac{f_{10}}{f_5}+\frac{q}{2}\left(\frac{f_{1}f_{10}^{6}}{f_{2}f_{5}^{6}}+4q\frac{f_{10}^{9}}{f_{1}^{2}f_{5}^{7}}\right)-\frac{f_{5}^{7}}{8f_{10}^{3}}+\dfrac{q}{2}\frac{f_{1}f_{5}^{2}f_{10}^{2}}{f_{2}}-2q\frac{f_{2}^{2}f_{10}^{3}}{f_{5}}\notag\\
&=\frac{f_{10}}{8f_5}-\frac{f_5^7}{8f^3_{10}}+f_1^3f_2
+\dfrac{q}{2}\left(\frac{f_{1}f_{10}^6}{f_2f_5^6}+\frac{f_{1}f_5^2f_{10}^2}{f_2}\right)-2q\frac{f_{2}^{2}f_{10}^{3}}{f_{5}}
+2q^2\frac{f_{10}^{9}}{f_{1}^{2}f_{5}^{7}}.\notag
\end{align}
Therefore, 
\begin{align*}&\sum_{n=0}^{\infty}DSOME(5n+1)q^n\\
&\equiv\frac{f_{10}}{8f_5}-\frac{f_5^7}{8f^3_{10}}+f_1^3f_2
+q\frac{f_{1}f_5^2f_{10}^2}{f_2}-2q\frac{f_{2}^{2}f_{10}^{3}}{f_{5}}
+2q^2\frac{f_{10}^{9}}{f_{1}^{2}f_{5}^{7}}\pmod{4},
\end{align*}
which by \eqref{Lemma4b} yields
\begin{align*}&\sum_{n=0}^{\infty}DSOME(5n+1)q^n\\
&=\frac{f_{10}}{8f_5}-\frac{f_5^7}{8f^3_{10}}+f_1^3f_2
+\left(\frac{f_2^3f_5^4}{f_1f_{10}^2}-\frac{f_5^7}{f_{10}^3}\right)-2q\frac{f_{2}^{2}f_{10}^{3}}{f_{5}}
+2q^2\frac{f_{10}^{9}}{f_{1}^{2}f_{5}^{7}}\notag\\
&\equiv \frac{f_{10}}{8f_5}-\frac{9}{8}\frac{f_5^7}{f_{10}^3}+2f_1f_4+2q\frac{f_4f_{10}^3}{f_5}+2q^2\frac{f_5f_{10}^5}{f_2}\pmod{4}.
\end{align*}
 Employing \eqref{c1} and \eqref{c2} in the above, we have
       \begin{align}              
    \label{c7} &\sum_{n=0}^{\infty}DSOME(5n+1)q^n\\
    &\equiv \frac{f_{10}}{8f_5}-\frac{9}{8}\frac{f_5^7}{f_{10}^3}+2f_{25}f_{100}\bigg(T(q^5)-q-\frac{q^2}{T(q^5)}\bigg)\bigg(T(q^{20})-q^4-\frac{q^8}{T(q^{20})}\bigg)\notag\\
    &\quad+2q\frac{f_{10}^3f_{100}}{f_5}
  \bigg(T(q^{20})-q^4-\frac{q^8}{T(q^{20})}\bigg)
         +2q^2\frac{f_5f_{10}^5f_{50}^5}{f_{10}^6}\bigg(T^4(q^{10})\notag\\
         &\quad+q^2T^3(q^{10})+2q^4T^2(q^{10})+3q^6T(q^{10})+5q^8-3\frac{q^{10}}{T(q^{10})}
         +2\frac{q^{12}}{T^2(q^{10})}\notag\\
         &\quad-\frac{q^{14}}{T^3(q^{10})}+\frac{q^{16}}{T^4(q^{10})}\bigg)\pmod{4}\notag.
       \end{align}
       
       To derive Theorem \ref{Theorem1.2}, we need to extract the terms involving $q^{5n+r}$ for $r=1,2,3$, and $4$ in  \eqref{c7}. We treat each of the cases for $r$ separately in the following. 

      \vskip .4cm \noindent \emph{Case $r=1$}. Extracting the terms involving $q^{5n+1}$ from both sides of \eqref{c7}, dividing by $q$, and then replacing $q^5$ by $q$, we obtain
      \begin{align*} &\sum_{n=0}^{\infty}DSOME(25n+6)q^n\\
      &\equiv 2f_5f_{20}\bigg(-T(q^4)+\frac{q}{T(q)}\bigg)+2\frac{f_2^3f_{20}T(q^4)}{f_1}+2q^3\frac{f_1f_{10}^5}{f_2T^3(q^2)}\pmod{4},\end{align*}
      which by \eqref{TGH} can be rewritten as
      \begin{align}\label{ty5}&\sum_{n=0}^{\infty}DSOME(25n+6)q^n\\
      &\equiv 2f_5f_{20}\bigg(\frac{G(q^4)}{H(q^4)}-q\frac{H(q)}{G(q)}\bigg)+2f_1f_4f_{20}\frac{G(q^4)}{H(q^4)}+2q^3\frac{f_1f_{10}^5H^3(q^2)}{f_2G^3(q^2)}\notag\\
         &\equiv 2\frac{f_5f_{20}}{G(q)H(q^4)}\bigg(G(q)G(q^4)-qH(q)H(q^4)\bigg)+2f_1f_4f_{20}\frac{G(q^4)}{H(q^4)}\notag\\&\quad+2q^3\frac{f_1f_{10}f_{40}H(q^2)H(q^4)}{f_2G(q^2)G(q^4)}\pmod{4},\notag
    \end{align}
    where we also used the fact that for any positive integer $j$,
    \begin{align}\label{HGmod2}G^2(q^j)\equiv G(q^{2j})\pmod{2}\quad\textup{and}\quad H^2(q^j)\equiv H(q^{2j})\pmod{2}.
    \end{align}

    Next, it follows from \eqref{Gq} and \eqref{Hq} that
     \begin{align} H(q)G(q)&=\frac{f_{5}}{f_1}.\label{HqGq}
\end{align}
Employing \eqref{lemmaG}, \eqref{HGmod2}, and \eqref{HqGq} in \eqref{ty5}, we find that
 \begin{align}   \label{dsome25n6}&\sum_{n=0}^{\infty}DSOME(25n+6)q^n\\
 &\equiv 2\frac{f_5f_{20}}{f_2G(q)H(q^4)}+2f_1f_8G^2(q^4)+2q^3f_1f_{40}\dfrac{H(q^8)}{G(q^4)}\notag\\ 
&\equiv 2f_1f_2H(q)G(q^4)+2f_1f_8G(q^4)\left(G(q^4)+q^3H(q^{16})\right)\pmod{4}.\notag
\end{align}

However, from \eqref{watson}, we have
       \begin{align} H(q)&\equiv \frac{f_8}{f_2}\left(G(q^4)+q^3H(q^{16})\right)\pmod{2}.\label{H(1)}
        \end{align}
Employing \eqref{H(1)} in \eqref{dsome25n6}, we find that
    \begin{align}
\sum_{n=0}^{\infty}DSOME(25n+6)q^n&\equiv 2f_1f_2H(q)G(q^4)+2f_1f_2H(q)G(q^4)\equiv0\pmod{4}.\notag 
        \end{align}
        This completes the proof of the case $r=1$.
        
      \vskip .4cm \noindent \emph{Case $r=2$}.
  Extracting the terms involving $q^{5n+2}$\ from both sides of \eqref{c7}, dividing by $q^2$, and then replacing $q^5$ by $q$, we obtain
  \begin{align*}
\sum_{n=0}^{\infty}DSOME(25n+11)q^n
    &\equiv 2f_5f_{20}\frac{T(q^4)}{T(q)}+2\frac{f_1f_{10}^5}{f_2}\left(T^4(q^2)-\frac{3q^2}{T(q^2)}\right)\pmod{4}.
    \end{align*}
    Using \eqref{TGH}, \eqref{HGmod2}, and \eqref{HqGq}, we find that
  \begin{align*}
&\sum_{n=0}^{\infty}DSOME(25n+11)q^n\\
    &\equiv 2f_5f_{20}\frac{G(q^4)H(q)}{H(q^4)(q)}+2\frac{f_1f_{10}^5}{f_2}\left(\frac{G^4(q^2)}{H^4(q^2)}+q^2\frac{H(q^2)}{G(q^2)}\right)\notag\\    &\equiv 2f_1f_4H^2(q)G^2(q^4)+2\frac{f_1f_{10}^5}{f_2}\left(\frac{G(q^8)}{H(q^8)}+q^2\frac{H(q^2)}{G(q^2)}\right)\notag\\
      &\equiv 2f_1f_4H(q^2)G(q^8)+2\frac{f_1f_{10}^5}{f_2G(q^2)H(q^8)}\left(G(q^2)G(q^8)+q^2H(q^2)H(q^8)\right)\pmod{4}.
      \end{align*}
Using \eqref{lemmaG} in the above and then employing \eqref{HqGq}, we obtain
   \begin{align}  \sum_{n=0}^{\infty}DSOME(25n+11)q^n &\equiv 2f_1f_4H(q^2)G(q^8)+2\frac{f_1f_{10}^5}{f_2f_4G(q^2)H(q^8)}\notag\\
      &\equiv 2f_1f_4H(q^2)G(q^8)+2\frac{f_1f_4f_{10}f_{40}}{f_2f_8G(q^2)H(q^8)}\notag\\ &\equiv 2f_1f_4H(q^2)G(q^8)+ 2f_1f_4H(q^2)G(q^8)\notag\\
      &\equiv  4f_1f_4H(q^2)G(q^8)\notag\\
      &\equiv 0\pmod{4},\notag
      \end{align}
which completes the proof of the case $r=2$.

      \vskip .4cm \noindent \emph{Case $r=3$}.
We extract the terms involving $q^{5n+3}$ from both sides of \eqref{c7} and then proceed as in the previous two cases. Accordingly, we deduce that
\begin{align*}
&\sum_{n=0}^{\infty}DSOME(25n+16)q^n\notag\\
&\equiv 2qf_5f_{20}\frac{T(q)}{T(q^4)}+2q\frac{f_1f_{10}^5}{f_2}\left(T(q^2)+\frac{q^2}{T^4(q^2)}\right)\notag\\
    &\equiv 2qf_5f_{20}\frac{H(q^4)G(q)}{G(q^4)H(q)}+2q\frac{f_1f_{10}f_{40}}{f_2}\left(\frac{G(q^2)}{H(q^2)}+q^2\frac{H^4(q^2)}{G^4(q^2)}\right)\notag\\
     &\equiv 2qf_5f_{20}\frac{H(q^4)G(q)}{G(q^4)H(q)}+2q\frac{f_1f_{10}f_{40}}{f_2H(q^2)G(q^8)}\left(G(q^2)G(q^8)+q^2G(q^2)G(q^8)\right)\notag\\
&\equiv 2qf_1f_4G(q^2)H(q^8)+2q\frac{f_1f_{10}f_{40}}{f_2f_4H(q^2)G(q^8)}\notag\\     
&\equiv 2qf_1f_4G(q^2)H(q^8)+2qf_1f_4G(q^2)H(q^8)\notag\\ 
&\equiv0\pmod{4}.
      \end{align*}
          
     \vskip .4cm \noindent \emph{Case $r=4$}. For this case, we extract the terms involving $q^{5n+4}$ from both sides of \eqref{c7} and then proceed as in the case for $r=1$ to deduce that     
    \begin{align}  \label{case4a} &\sum_{n=0}^{\infty}DSOME(25n+21)q^n\equiv  2f_5f_{20}\left(T(q)+\frac{q}{T(q^4)}\right)+2q\frac{f_1f_2^2f_{20}}{T(q^4)}+2\frac{f_1f_{10}^5}{f_2}T^3(q^2)\\
      &\equiv  \frac{2f_5f_{20}}{H(q)G(q^4)}\left(G(q)G(q^4)+qH(q)H(q^4)\right)+2qf_1f_4f_{20}\dfrac{H(q^4)}{G(q^4)}\notag\\
      &\quad+2\frac{f_1f_{10}f_{40}G(q^2)G(q^4)}{f_2H(q^2)H(q^4)}\notag\\
      &\equiv  2\frac{f_5f_{20}}{f_2H(q)G(q^4)}+2qf_1f_4^2H(q^8)+2f_1f_8H(q^4)G(q^{16})\notag\\
      &\equiv 2f_1f_2G(q)H(q^4)+2f_1f_8H(q^4)\left(G(q^{16})+qH(q^4)\right)\pmod{4}.\notag
       \end{align}
     
     Now, by \eqref{c5}, we have
     \begin{align}\label{Gmod2}
     G(q)\equiv \dfrac{f_8}{f_2}\left(G(q^{16})+qH(q^4)\right)\pmod{4}.
     \end{align}
     Using \eqref{Gmod2} in \eqref{case4a}, we arrive at
    \begin{align}
      \sum_{n=0}^{\infty}DSOME(25n+21)q^n
      &\equiv 4f_1f_2G(q)H(q^4)\equiv  0\pmod{4}.\notag
       \end{align}
     This completes the proof.  
  \end{proof}
  \begin{proof}[Proof of Theorem \ref{Theorem1.3}]
  Employing \eqref{c1} and \eqref{c2} in \eqref{c8}, and then extracting the terms involving $q^{5n}$ from the resulting identity, we find that
\begin{align*}
    &\sum_{n=0}^{\infty}DSOME(25n+1)q^n\\
    &=\frac{f_2}{8f_1}-\frac{f_5^7}{8f_{10}}+f_5^3f_{10}\bigg(x^{3}y+\frac{q^2}{x^{3}y}+3q\left(\frac{x^2}{y}-\frac{y}{x^2}\right)-5q\bigg)\\
    &\quad+\frac{1}{2}\bigg(\frac{f_5f_{10}^5}{f_1^6}
    +\frac{f_1^2f_5f_{10}^5}{f_2^4}\bigg)\bigg(2q\big(xy^2-\frac{q^2}{xy^2}\big)
    -q\big(\frac{y^3}{x}+q^2\frac{x}{y^3}\big)-5q^2\bigg)\\
   &\quad+2q\frac{f_2^3f_{10}^2}{f_1}+2q\frac{f_2^9f_5^{10}}{f_1^{19}}\bigg(10\left(x^5-\frac{q^2}{x^5}\right)+15q\bigg),\end{align*}
   where, as defined in Lemma \ref{lemmaxy},  $x=T(q)$ and $y=T(q^2)$. Employing \eqref{Lemma3a}--\eqref{x51byx5} and \eqref{qq12} in the above, we find that
\begin{align*}
&\sum_{n=0}^{\infty}DSOME(25n+1)q^n\notag\\
    &=\frac{f_2}{8f_1}-\frac{f_1^7}{8f_2^3}+f_5^3f_{10}\bigg(K+\frac{16q^2}{K}-3q\bigg)+
    \frac{q}{2}\bigg(\frac{f_5f_{10}^5}{f_1^6}
    +\frac{f_1^2f_5f_{10}^5}{f_2^4}\bigg)\bigg(K-\frac{4q^2}{K}-3q\bigg)\\
    &\quad+2q\frac{f_2^3f_{10}^2}{f_1}+10q\frac{f_2^9f_5^{10}}{f_1^{19}}\bigg(2\frac{f_1^6}{f_5^6}+25q\bigg)\\
     &=\frac{f_2}{8f_1}-\frac{f_1^7}{8f_2^3}+\frac{f_2^2f_5^8}{f_1f_{10}^4}+16q^2\frac{f_1f_{10}^6}{f_2f_5^2}-3qf_5^3f_{10}
    +\frac{q}{2}\frac{f_1^4f_{10}^3}{f_2^2f_5}+\frac{q}{2}\frac{f_2^2f_{10}^3}{f_1^4f_5}\\
    &\quad+2q\frac{f_2^3f_{10}^2}{f_1}+20q\frac{f_2^9f_5^4}{f_1^{13}}+250q^2\frac{f_2^9f_5^{10}}{f_1^{19}}.
\end{align*}
We now take congruence modulo 8 on both sides of the above and repeatedly use \eqref{Lemma4a} for further simplification. Accordingly, we obtain

\begin{align*}  &\sum_{n=0}^{\infty}DSOME(25n+1)q^n\notag\\
   &\equiv  \frac{9f_2}{8f_1}-\frac{f_1^7}{8f_2^3}-3qf_5^3f_{10}+\frac{q}{2}\frac{f_1^4f_{10}^3}{f_2^2f_5}+\frac{q}{2}\frac{f_2^2f_{10}^3}{f_1^4f_5}+6q\frac{f_2^3f_{10}^2}{f_1}+2q^2\frac{f_1f_5^2f_{20}^2}{f_2}\notag\\
   &\equiv \frac{9}{8}\frac{f_2}{f_1}-\frac{1}{8}\bigg(\frac{f_1^3f_5^4}{f_2f_{10}^{2}}-4q\frac{f_1^4f_{10}^3}{f_2^2f_5}\bigg)-3qf_5^3f_{10}+\frac{q}{2}\frac{f_1^4f_{10}^3}{f_2^2f_5}+\frac{q}{2}\bigg(\frac{f_{10}^5}{f_5^5}+4q\frac{f_2f_{10}^8}{f_1^3f_5^6}\bigg)
   \notag\\
   &\quad+6qf_1^3f_2f_{10}^2+2q^2\frac{f_1f_5^2f_{20}^2}{f_2}\notag\\
   &\equiv \frac{9}{8}\frac{f_2}{f_1}-\frac{1}{8}\frac{f_1^3f_5^4}{f_2f_{10}^2}-3qf_5^3f_{10}+q\frac{f_1^4f_{10}^3}{f_2^2f_5}+\frac{q}{2}\frac{f_{10}^5}{f_5^5}+6qf_1^3f_2f_{10}^2+4q^2\frac{f_1f_5^2f_{20}^2}{f_2}\notag\\
   &\equiv \frac{9}{8}\frac{f_2}{f_1}-\frac{1}{8}\bigg(\frac{f_2f_5^8}{f_1f_{10}^4}-4qf_5^3f_{10}\bigg)-3qf_5^3f_{10}+q\left(f_5^3f_{10}-4q\frac{f_1f_{10}^6}{f_2f_5^2} \right)+\frac{q}{2}\frac{f_{10}^5}{f_5^5}
   \notag\\
   &\quad+6qf_1^3f_2f_{10}^2+4q^2\frac{f_1f_5^2f_{20}}{f_2}\end{align*}\begin{align*}
   &\equiv\frac{f_2}{f_1}\left(\frac{9}{8}-\frac{f_5^8}{8f_{10}^4}\right)-\frac{3q}{2}f_5^3f_{10}+\frac{q}{2}\frac{f_{10}^5}{f_5^5}-2qf_1^3f_2f_{10}^2\pmod{8}.
\end{align*}
 With the use of  \eqref{c1} and \eqref{c2}, we rewrite the above as
 \begin{align*}
&\sum_{n=0}^{\infty}DSOME(25n+1)q^n\\
&\equiv \frac{f_{50}f_{25}^5}{f_5^6}\bigg(\frac{9}{8}-\frac{f_5^8}{8f_{10}^4}\bigg)\bigg(T(q^{10})-q^2-\frac{q^4}{T(q^{10})}\bigg)\bigg(T^4(q^5)+qT^3(q^5)+2q^2T^2(q^5)\\
&\quad+3q^3T(q^5)+5q^4-\frac{3q^5}{T(q^5)}
    +\frac{2q^6}{T^2(q^5)}-\frac{q^7}{T^3(q^5)}+\frac{q^8}{T^4(q^5)}\bigg)-\frac{3q}{2}f_5^3f_{10}+\frac{q}{2}\frac{f_{10}^5}{f_5^5}\\
    &\quad -2qf_{10}^2f_{25}^3f_{50}\bigg(T(q^{10})-q^2-\frac{q^4}{T(q^{10})}\bigg)\bigg(T^3(q^5)-3qT^2(q^5)+5q^3-\frac{3q^5}{T^2(q^5)}\\
    &\quad-\frac{q^6}{T^3(q^5)}\bigg).
    \end{align*}
    Extracting the terms  involving $q^{5n+1}$ from both sides of the above and then using \eqref{Lemma3b}, \eqref{Lemma3d}, \eqref{Lemma4b} and \eqref{qq12}, we deduce that
\begin{align*} &\sum_{n=0}^{\infty}DSOME(125n+26)q^n\\
&\equiv \frac{f_{10}f_5^5}{f_1^6}\bigg(\frac{9}{8}-\frac{f_1^8}{8f_2^4}\bigg)\bigg(x^3y+\frac{q^2}{x^3y}-2\left(\frac{x^2}{y}-\frac{y}{x^2}\right)-5q\bigg)
-\frac{3}{2}f_1^3f_2+\frac{1}{2}\frac{f_2^5}{f_1^5}
     \notag\\    &\quad-2qf_2^2f_5^3f_{10}\bigg(x^3y+\frac{q^2}{x^3y}+3q\left(\frac{x^2}{y}-\frac{y}{x^2}\right)-5q\bigg)\\
     &\equiv \frac{f_{10}f_5^5}{f_1^6}\bigg(\frac{9}{8}-\frac{f_1^8}{8f_2^4}\bigg)\left(K^2-\dfrac{4q^2}{K}-3q\right)
-\frac{3}{2}f_1^3f_2+\frac{1}{2}\frac{f_2^5}{f_1^5}-2f_2^2f_5^3f_{10}(K+q)\\
&\equiv \frac{9}{8}\frac{f_2^2f_5^3}{f_1^4f_{10}}-\frac{1}{8}\frac{f_1^4f_5^3}{f_2^2f_{10}}-\frac{3}{2}f_1^3f_2+\frac{f_2^5}{2f_1^5}-2f_1^2f_2^4\frac{f_5}{f_{10}}.
     \end{align*}
    Now by repeated use of \eqref{Lemma3a} and \eqref{Lemma3b} in the above, we find that 
     \begin{align}
     \label{mod8gen}&\sum_{n=0}^{\infty}DSOME(125n+26)q^n\\   
     &\equiv\frac{9}{8}\left(\frac{f_{10}}{f_5}+4q\frac{f_2f_{10}^4}{f_1^3f_5^2}\right)-\frac{1}{8}\left(\frac{f_5^7}{f_{10}^3}-4q\frac{f_1f_5^2f_{10}^2}{f_2}\right)-\frac{3}{2}\left(\frac{f_2^3f_5^4}{f_1f_{10}^2}-4q\frac{f_2^2f_{10}^3}{f_5}\right)\notag\\
     &\quad+\frac{1}{2}\left(\frac{f_2^3f_{10}^2}{f_1f_5^4}+4q\frac{f_2^4f_{10}^5}{f_1^4f_5^5}\right)-2\left(\frac{f_1^3f_2f_5^4}{f_{10}^2}+q\frac{f_1^4f_{10}^3}{f_5}\right)\notag\\
       &\equiv\frac{9}{8}\frac{f_{10}}{f_5}+\frac{9q}{2}\left(\frac{f_1f_{10}^6}{f_2f_5^6}+4q\frac{f_{10}^9}{f_1^2f_5^7}\right)-\frac{1}{8}\frac{f_5^7}{f_{10}^3}+\frac{q}{2}\frac{f_1f_5^2f_{10}^2}{f_2}\notag\\
         &\quad-\frac{3}{2}\left(\frac{f_5^7}{f_{10}^3}+q\frac{f_1f_5^2f_{10}^2}{f_2}\right)+\frac{f_2^3f_{10}^2}{2f_1f_5^4}-2f_1^3f_2-2q\left(\frac{f_1^5f_5^2f_{10}^2}{f_2^3}+q\frac{f_1^6f_{10}^7}{f_2^4f_5^3}\right)\notag\\
         &\equiv\frac{9}{8}\frac{f_{10}}{f_5}+\frac{9q}{2}\frac{f_1f_{10}^6}{f_2f_5^6}-\frac{13}{8}\frac{f_5^7}{f_{10}^3}-3q\frac{f_1f_5^2f_{10}^2}{f_2}+\frac{1}{2}\left(\frac{f_{10}}{f_5}+q\frac{f_1f_{10}^6}{f_2f_5^6}\right)\notag\\
         &\quad-2f_1^3f_2
         \notag\end{align}\begin{align}
        &\equiv\frac{13}{8}\left(\frac{f_{10}}{f_5}-\frac{f_5^7}{f_{10}^3}\right)+2\left(\frac{f_2^3f_5^4}{f_1f_{10}^2}-\frac{f_5^7}{f_{10}^3}\right)-2f_1^3f_2\notag\\
            &\equiv\frac{13}{8}\left(\frac{f_{10}}{f_5}-\frac{f_5^7}{f_{10}^3}\right)-2\frac{f_{10}}{f_5}\pmod{8}.\notag
\end{align}
It follows from the above that
\begin{align*}    DSOME(625n+125r+26)\equiv0\pmod{8},\, 1\leq r\leq4.
\end{align*}
This completes the proof of Theorem \ref{Theorem1.3}.
  \end{proof}
 \begin{proof}[Proof of Theorem \ref{Theorem1.4}] Using \eqref{DSOMEgf} in \eqref{mod8gen}, we have
 \begin{align}     
\sum_{n=0}^{\infty}DSOME(125n+26)q^n&\equiv  5\sum_{n=0}^{\infty}DSOME(n)q^{5n}-2\frac{f_{10}}{f_5}\pmod{8},\label{eq123}
\end{align}
which yields
     \begin{align*}
        \sum_{n=0}^{\infty} DSOME(625n+26)q^n\equiv  5\sum_{n=0}^{\infty} DSOME(n)q^n-2\frac{f_2}{f_1}\pmod{8}.
        \end{align*}
   Employing \eqref{c1} and \eqref{c2} in the above and then extracting the terms involving $q^{5n+1}$ from the resulting congruence, we find that
\begin{align*}
&\sum_{n\geq0}DSOME\bigg(3125n+651\bigg)q^n\notag\\
&\equiv 5\sum_{n\geq0}DSOME\bigg(5n+1\bigg)q^n-2\frac{f_5^5f_{10}}{f_1^6}\left(K^2-\dfrac{4q^2}{K}-3q\right),
        \end{align*}
    which, by \eqref{qq12}, implies that
\begin{align*}
\sum_{n\geq0}DSOME\bigg(3125n+651\bigg)q^n&\equiv 5\sum_{n\geq0}DSOME\bigg(5n+1\bigg)q^n-2\frac{f_2^2f_5^3}{f_1^4f_{10}}\notag\\
&\equiv 5\sum_{n\geq0}DSOME\bigg(5n+1\bigg)q^n-2\frac{f_{10}}{f_5}\pmod{8}.
        \end{align*}
It follows from the above congruence and \eqref{eq123}  that
          \begin{align}\label{y1}       
        &\sum_{n\geq0}DSOME\bigg(3125n+651\bigg)q^n\\
&\equiv 5\sum_{n\geq0}DSOME\bigg(5n+1\bigg)q^n+\sum_{n\geq0}DSOME\notag
\bigg(125n
+26\bigg)q^n\notag\\&\quad-5\sum_{n\geq0}DSOME(n)q^{5n}\pmod{8}.\notag
     \end{align}
Comparing the coefficients of $q^{5n}$ as well as $q^{5n+r}$, where  $1\leq r\leq4$, in turn,    on both sides of \eqref{y1}, we obtain
  \begin{align*} DSOME(15625n+651)&\equiv 5DSOME(25n+1)+DSOME(625n+26)\\ &\quad-5DSOME(n)\pmod{8}\\\intertext{and}
 DSOME(15625n+3125r+651)&\equiv 5DSOME(25n+5r+1)\\
&\quad+DSOME(625n
+125r+26)\pmod{8}\notag.
\end{align*}
Thus we  complete the proof of Theorem \ref{Theorem1.4}.
  \end{proof}
\section{conclusion and remarks}
In this paper, we have expressed the generating function of $DSOME(n)$ found by Andrews and Ghosh Dastidar in a closed form. The new expression allows us to find new congruences modulo 4 and 8. It seems that far more congruences are yet to be discovered. We propose the following conjecture based on numerical calculations.
\begin{conjecture} For all $n\geq0$,
    \begin{align*}
        DSOME(50n+21)&\equiv0\pmod{8}\\\intertext{and}
        DSOME(100n+71)&\equiv0\pmod{16}.
    \end{align*}
\end{conjecture}

\subsection*{Acknowledgements}
The authors wish to thank the anonymous referee and Dazhao Tang for their helpful comments. The second author was partially supported by University Grants Commission, Government of India, under the UGC-JRF scheme (Ref. No. 221610056019). The author thanks the funding agency.


\end{document}